\documentclass[reqno,11pt]{amsart}

\usepackage[T1]{fontenc}
\usepackage{amsfonts,amssymb,amsmath,amsthm}
\usepackage{mathtools}
\usepackage[mathscr]{euscript}
\usepackage{fullpage}
\usepackage{enumitem}
\usepackage{cite}
\usepackage{xcolor}
\usepackage{tikz}
\usepackage{hyperref}

\usetikzlibrary{
  graphs,
  patterns,
  decorations.markings,
  decorations.pathmorphing,
  decorations.pathreplacing,
  arrows,
  matrix,
  calc,
  shapes
}

\numberwithin{equation}{section}
\allowdisplaybreaks

\newtheorem{thm}{Theorem}[section]
\newtheorem{prop}[thm]{Proposition}
\newtheorem{lem}[thm]{Lemma}

\theoremstyle{definition}
\newtheorem{defn}[thm]{Definition}
\newtheorem{ex}[thm]{Example}
\theoremstyle{remark}
\newtheorem{rmk}[thm]{Remark}

\renewcommand{\leq}{\leqslant}
\renewcommand{\geq}{\geqslant}

\newcommand{\Des}{\text{Des}}
\newcommand{\Inv}{\text{Inv}}
\newcommand{\des}{\text{des}}
\newcommand{\maj}{\text{maj}}
\newcommand{\inv}{\text{inv}}

\newcommand{\mstc}{\text{mstc}}
\newcommand{\code}{\text{code}}
\newcommand{\st}{\text{st}}
\newcommand{\std}{\text{std}}
\newcommand{\qbinom}[1]{\qbinomaux#1\relax}
\def\qbinomaux#1,#2\relax{\genfrac{(}{)}{0pt}{}{#1}{#2}_{q}}

\newcommand{\onebar}{\mathord{\vert}}
\newcommand{\twobars}{\mathord{\vert\mkern2mu\vert}}
\newcommand{\barwt}[2]{\underset{\scriptstyle #2}{\mathord{#1}}}
\newcommand{\nogap}{\phantom{\onebar}}

\tikzset{
  doubleword/.style={
    matrix of math nodes,
    row sep=5pt,
    column sep=2pt,
    nodes={inner sep=0pt,minimum width=1.2em,minimum height=1.35em},
    column 1/.style={nodes={minimum width=4.8em,anchor=east}}
  },
  exampleword/.style={
    matrix of math nodes,
    row sep=5pt,
    column sep=2pt,
    nodes={inner sep=0pt,minimum width=1.2em,minimum height=1.35em}
  },
  compartment bar/.style={line width=.55pt},
  beta bar/.style={compartment bar},
  double word entry/.style={
    inner sep=0pt,
    minimum width=1.5em,
    minimum height=1.35em
  },
  double word row label/.style={anchor=east,inner sep=0pt},
  double word descent weight/.style={anchor=north,inner sep=1pt},
  double word letter weight/.style={inner sep=0pt}
}

\newcommand{\barreddoubleword}[6][]{%
\begin{tikzpicture}[baseline=(current bounding box.center),#1]
  \def\dwN{#2}%
  \def\dwxstep{0.78}%
  \def\dwrowy{-0.58}%
  \def\dwbarsep{0.09}%
  \node[double word row label] at (-0.78,0) {$w$};
  \node[double word row label] at (-0.78,\dwrowy) {$\widehat w$};
  \foreach \dwletter [count=\dwcolumn from 1] in {#3}{
    \node[double word entry]
      at ({(\dwcolumn-1)*\dwxstep},0) {$\dwletter$};
  }
  \foreach \dwletter [count=\dwcolumn from 1] in {#4}{
    \node[double word entry]
      at ({(\dwcolumn-1)*\dwxstep},\dwrowy) {$\dwletter$};
  }
  \path ({-0.5*\dwxstep},0) -- ({(\dwN-0.5)*\dwxstep},0);
  \if\relax\detokenize{#5}\relax\else
    \foreach \dwposition/\dwmultiplicity in {#5}{
      \foreach \dwcopy in {1,...,\dwmultiplicity}{
        \pgfmathtruncatemacro{\dwlevel}{\dwcopy/2}
        \ifnum\dwcopy=1
          \def\dwoffset{0}%
        \else
          \ifodd\dwcopy
            \pgfmathsetmacro{\dwoffset}{-\dwlevel*\dwbarsep}%
          \else
            \pgfmathsetmacro{\dwoffset}{\dwlevel*\dwbarsep}%
          \fi
        \fi
        \pgfmathsetmacro{\dwbarx}
          {(\dwposition-0.5)*\dwxstep+\dwoffset}
        \draw[compartment bar] (\dwbarx,0.30) -- (\dwbarx,-0.84);
      }
    }
  \fi
  \if\relax\detokenize{#6}\relax\else
    \foreach \dwposition in {#6}{
      \pgfmathsetmacro{\dwbarx}{(\dwposition-0.5)*\dwxstep}
      \node[double word descent weight] at (\dwbarx,-0.90)
        {$\scriptstyle xq^{\dwposition}$};
    }
  \fi
\end{tikzpicture}%
}

\newcommand{\insertiondoubleword}[7][]{%
\begin{tikzpicture}[baseline=(current bounding box.center),#1]
  \def\dwN{#2}%
  \def\dwxstep{0.78}%
  \def\dwrowy{-0.58}%
  \def\dwweighty{-1.16}%
  \def\dwbarsep{0.09}%
  \node[double word row label] at (-0.78,0) {$w$};
  \node[double word row label] at (-0.78,\dwrowy) {$\widehat w$};
  \foreach \dwletter [count=\dwcolumn from 1] in {#3}{
    \node[double word entry]
      at ({(\dwcolumn-1)*\dwxstep},0) {$\dwletter$};
  }
  \foreach \dwletter [count=\dwcolumn from 1] in {#4}{
    \node[double word entry]
      at ({(\dwcolumn-1)*\dwxstep},\dwrowy) {$\dwletter$};
  }
  \path ({-0.5*\dwxstep},\dwweighty)
    -- ({(\dwN-0.5)*\dwxstep},\dwweighty);
  \if\relax\detokenize{#5}\relax\else
    \foreach \dwposition/\dwmultiplicity in {#5}{
      \foreach \dwcopy in {1,...,\dwmultiplicity}{
        \pgfmathtruncatemacro{\dwlevel}{\dwcopy/2}
        \ifnum\dwcopy=1
          \def\dwoffset{0}%
        \else
          \ifodd\dwcopy
            \pgfmathsetmacro{\dwoffset}{-\dwlevel*\dwbarsep}%
          \else
            \pgfmathsetmacro{\dwoffset}{\dwlevel*\dwbarsep}%
          \fi
        \fi
        \pgfmathsetmacro{\dwbarx}
          {(\dwposition-0.5)*\dwxstep+\dwoffset}
        \draw[compartment bar] (\dwbarx,0.30) -- (\dwbarx,-0.84);
      }
    }
  \fi
  \if\relax\detokenize{#7}\relax\else
    \foreach \dwposition in {#7}{
      \pgfmathsetmacro{\dwletterx}{(\dwposition-1)*\dwxstep}
      \draw[beta bar] (\dwletterx,0.30) -- (\dwletterx,-0.84);
    }
  \fi
  \if\relax\detokenize{#6}\relax\else
    \foreach \dwposition/\dwweight in {#6}{
      \pgfmathsetmacro{\dwletterx}{(\dwposition-1)*\dwxstep}
      \node[double word letter weight] at (\dwletterx,\dwweighty)
        {$\scriptstyle \dwweight$};
    }
  \fi
\end{tikzpicture}%
}

\newcommand{\weighteddoubleword}[8][]{%
\begin{tikzpicture}[baseline=(current bounding box.center),#1]
  \def\dwN{#2}%
  \def\dwxstep{0.78}%
  \def\dwrowy{-0.58}%
  \def\dwbarweighty{-0.90}%
  \def\dwletterweighty{0.62}%
  \def\dwbarsep{0.09}%
  \node[double word row label] at (-0.78,0) {$w$};
  \node[double word row label] at (-0.78,\dwrowy) {$\widehat w$};
  \foreach \dwletter [count=\dwcolumn from 1] in {#3}{
    \node[double word entry]
      at ({(\dwcolumn-1)*\dwxstep},0) {$\dwletter$};
  }
  \foreach \dwletter [count=\dwcolumn from 1] in {#4}{
    \node[double word entry]
      at ({(\dwcolumn-1)*\dwxstep},\dwrowy) {$\dwletter$};
  }
  \path ({-0.5*\dwxstep},\dwletterweighty)
    -- ({(\dwN-0.5)*\dwxstep},\dwletterweighty);
  \if\relax\detokenize{#5}\relax\else
    \foreach \dwposition/\dwmultiplicity in {#5}{
      \foreach \dwcopy in {1,...,\dwmultiplicity}{
        \pgfmathtruncatemacro{\dwlevel}{\dwcopy/2}
        \ifnum\dwcopy=1
          \def\dwoffset{0}%
        \else
          \ifodd\dwcopy
            \pgfmathsetmacro{\dwoffset}{-\dwlevel*\dwbarsep}%
          \else
            \pgfmathsetmacro{\dwoffset}{\dwlevel*\dwbarsep}%
          \fi
        \fi
        \pgfmathsetmacro{\dwbarx}
          {(\dwposition-0.5)*\dwxstep+\dwoffset}
        \draw[compartment bar] (\dwbarx,0.30) -- (\dwbarx,-0.84);
      }
    }
  \fi
  \if\relax\detokenize{#8}\relax\else
    \foreach \dwposition in {#8}{
      \pgfmathsetmacro{\dwletterx}{(\dwposition-1)*\dwxstep}
      \draw[beta bar] (\dwletterx,0.30) -- (\dwletterx,-0.84);
    }
  \fi
  \if\relax\detokenize{#6}\relax\else
    \foreach \dwposition in {#6}{
      \pgfmathsetmacro{\dwbarx}{(\dwposition-0.5)*\dwxstep}
      \node[double word descent weight] at (\dwbarx,\dwbarweighty)
        {$\scriptstyle xq^{\dwposition}$};
    }
  \fi
  \if\relax\detokenize{#7}\relax\else
    \foreach \dwposition/\dwweight in {#7}{
      \pgfmathsetmacro{\dwletterx}{(\dwposition-1)*\dwxstep}
      \node[double word letter weight] at (\dwletterx,\dwletterweighty)
        {$\scriptstyle \dwweight$};
    }
  \fi
\end{tikzpicture}%
}

\newcommand{\manualweighteddoubleword}[8][]{%
\begin{tikzpicture}[baseline=(current bounding box.center),#1]
  \def\dwN{#2}%
  \def\dwxstep{0.78}%
  \def\dwrowy{-0.58}%
  \def\dwbarweighty{-0.90}%
  \def\dwletterweighty{0.62}%
  \def\dwbarsep{0.09}%
  \node[double word row label] at (-0.78,0) {$w$};
  \node[double word row label] at (-0.78,\dwrowy) {$\widehat w$};
  \foreach \dwletter [count=\dwcolumn from 1] in {#3}{
    \node[double word entry]
      at ({(\dwcolumn-1)*\dwxstep},0) {$\dwletter$};
  }
  \foreach \dwletter [count=\dwcolumn from 1] in {#4}{
    \node[double word entry]
      at ({(\dwcolumn-1)*\dwxstep},\dwrowy) {$\dwletter$};
  }
  \path ({-0.5*\dwxstep},\dwletterweighty)
    -- ({(\dwN-0.5)*\dwxstep},\dwletterweighty);
  \if\relax\detokenize{#5}\relax\else
    \foreach \dwposition/\dwmultiplicity in {#5}{
      \foreach \dwcopy in {1,...,\dwmultiplicity}{
        \pgfmathtruncatemacro{\dwlevel}{\dwcopy/2}
        \ifnum\dwcopy=1
          \def\dwoffset{0}%
        \else
          \ifodd\dwcopy
            \pgfmathsetmacro{\dwoffset}{-\dwlevel*\dwbarsep}%
          \else
            \pgfmathsetmacro{\dwoffset}{\dwlevel*\dwbarsep}%
          \fi
        \fi
        \pgfmathsetmacro{\dwbarx}
          {(\dwposition-0.5)*\dwxstep+\dwoffset}
        \draw[compartment bar] (\dwbarx,0.30) -- (\dwbarx,-0.84);
      }
    }
  \fi
  \if\relax\detokenize{#8}\relax\else
    \foreach \dwposition in {#8}{
      \pgfmathsetmacro{\dwletterx}{(\dwposition-1)*\dwxstep}
      \draw[beta bar] (\dwletterx,0.30) -- (\dwletterx,-0.84);
    }
  \fi
  \if\relax\detokenize{#6}\relax\else
    \foreach \dwposition/\dwqweight in {#6}{
      \pgfmathsetmacro{\dwbarx}{(\dwposition-0.5)*\dwxstep}
      \node[double word descent weight] at (\dwbarx,\dwbarweighty)
        {$\scriptstyle x\dwqweight$};
    }
  \fi
  \if\relax\detokenize{#7}\relax\else
    \foreach \dwposition/\dwweight in {#7}{
      \pgfmathsetmacro{\dwletterx}{(\dwposition-1)*\dwxstep}
      \node[double word letter weight] at (\dwletterx,\dwletterweighty)
        {$\scriptstyle \dwweight$};
    }
  \fi
\end{tikzpicture}%
}

\author{Alexandr Garbali}

\address{Alexandr Garbali, School of Mathematics and Statistics, The University of Melbourne, Australia}
\email{alexandr.garbali@unimelb.edu.au}

\title[Hypergeometric positivity]{Positivity of the basic hypergeometric series and the MacMahon identity}

\begin{document}

\begin{abstract}
The basic hypergeometric series ${}_{n+1}\phi_{n}$ whose upper and lower parameters are non-negative integer powers of $q$ satisfying a certain constraint can be, after a normalisation, computed as a positive polynomial \cite{GW}. We derive three formulas for this polynomial. The first formula is a generalisation of the MacMahon identity in which an infinite binomial series is identified with a normalised generating function of $\des$ and $\maj$ statistics of multiset permutations. The second formula is a generating function of multiset permutations for the inversion statistic and a generalisation of the statistic $\mstc$ of \cite{Carnevale}. The third formula is a finite sum containing $2n$ $q$-binomial coefficients.
\end{abstract}

\maketitle




\section{Introduction}
Let $\alpha=(\alpha_0,\alpha_1\ldots \alpha_n)$ and $\beta=(\beta_1\ldots \beta_n)$ be two collections of non-negative integers. Consider the following basic hypergeometric series 
\begin{align}\label{eq: hgf intro}
\phi_{\alpha,\beta}(x;q):= 
{}_{n+1}\phi_n\!\left[\begin{matrix}q^{\alpha_0+1},q^{\alpha_1+1},\ldots,q^{\alpha_n+1}\\ q^{\beta_1+1},\ldots,q^{\beta_n+1}\end{matrix};q,x\right]:=
\sum_{k\geq 0}x^k\frac{(q^{\alpha_0+1};q)_k\prod_{i=1}^{n}(q^{\alpha_i+1};q)_k}{(q;q)_k\prod_{i=1}^{n}(q^{\beta_i+1};q)_k}
\end{align}
where $\phi_{\alpha,\beta}(x;q)$ will be used as a shorthand notation and $(z;q)_k$ is the $q$-Pochhammer symbol
\begin{align}\label{eq: q-Pochhammer}
(z;q)_k = \prod_{i=0}^{k-1}(1-z q^i)
\end{align}
In \cite{GW} it was proved that the series \eqref{eq: hgf intro} can be summed and, when normalised, produces a positive polynomial in $x$ and $q$ provided the following conditions on $\alpha$ and $\beta$ are satisfied\footnote{For any permutations $\sigma\in S_{n+1}$ and $\pi\in S_{n}$ let $\alpha'=\sigma(\alpha)$ and $\beta'=\pi(\beta)$, then $\phi_{\alpha,\beta}(x;q)=\phi_{\alpha',\beta'}(x;q)$. One may use any such pair $\alpha',\beta'$ instead of $\alpha,\beta$ as long as the corresponding conditions \eqref{eq: positive regime intro} hold.}
\begin{align}\label{eq: positive regime intro}
\alpha_{i-1}\geq \beta_i,
 \quad
 \text{and}
 \quad 
\alpha_{i}\geq \beta_i,
\qquad
\text{for }
i=1\ldots n
\end{align}
This was done in \cite{GW} by explicitly resumming \eqref{eq: hgf intro} and writing it as a finite sum of products of $\sim n^2$ $q$-binomial coefficients
\begin{align}\label{eq:qbinom}
\qbinom{a,b}=
\begin{cases}
\frac{(q;q)_a}{(q;q)_{a-b}(q;q)_b} \quad  &  a\geq b\geq 0\\
0 &  \text{otherwise}
\end{cases}
\end{align}
In this paper we derive three formulas for the series \eqref{eq: hgf intro}: two formulas in terms of generating functions of certain statistics on multiset permutations and one formula in terms of a finite sum of products of $2n$ $q$-binomial coefficients. 

Let $\alpha_{i}\geq \beta_i$ for $i=1\ldots n$ and define two compositions $\delta=(\delta_0\ldots \delta_n)$ and $\gamma=(\gamma_1\ldots \gamma_{n+1})$ by
\begin{align}
\label{eq:delta gamma}
\delta_0=\gamma_{n+1}=\alpha_0,\qquad 
\delta_i=\gamma_i=\alpha_i-\beta_i, \hspace{0.5cm} i=1\ldots n
\end{align} 
and set $N=\delta_0+\cdots + \delta_n=\gamma_1+\cdots +\gamma_{n+1}$. We will assume throughout that
\begin{align}\label{eq:alpha_0 condition}
\alpha_0\geq\max(\beta)
\end{align}

Define $S_\delta$ to be the set of words given by the permutations of the multiset $(0^{\delta_0}\ldots n^{\delta_n})$ and $S_\gamma$ to be the set of words given by the permutations of the multiset $(1^{\gamma_1}\ldots (n+1)^{\gamma_{n+1}})$, then our three formulas read
\begin{align}\label{eq:phi des maj}
\phi_{\alpha,\beta}(x;q)
&=\frac{1}{\mathcal N(x,q)}
 \sum_{w\in S_{\delta}} x^{\des_\beta(w)} q^{\maj_\beta(w)+\inv_\beta(w)}\\
\label{eq:phi mstc inv}
\phi_{\alpha,\beta}(x;q)
&=\frac{1}{\mathcal N(x,q)}
 \sum_{w\in S_{\gamma}} x^{\mstc_\beta(w)} q^{\inv(w)}
\end{align}
\begin{align}\label{eq:phi binom}
\phi_{\alpha,\beta}(x;q)
&=\frac{1}{\mathcal N(x,q)}
\sum_{\kappa}
x^{\kappa_1}
\prod_{i=1}^n
q^{(\kappa_i-\kappa_{i+1})(\beta_i+\kappa_i)}
\qbinom{\alpha_i+\kappa_{i+1},\beta_i+\kappa_i}
\qbinom{\lambda_{i}-\kappa_{i+1},
\lambda_{i}-\kappa_{i}}
\end{align}
where the function $\mathcal N(x,q)$ is the normalising factor
\begin{align}
\label{eq: norm}
\mathcal N(x,q):= (x;q)_{N+1}\prod_{i=1}^n \qbinom{\alpha_i,\beta_i}
\end{align}
In the first formula \eqref{eq:phi des maj} the three statistics $\des_\beta,\maj_\beta$ and $\inv_\beta$ are modifications of the classical statistics $\des,\maj$ and $\inv$ (see Definitions \ref{def:classical statistics} and \ref{def:beta statistics}). For this formula to work we require the ordering $\beta_1\geq \ldots \geq \beta_n$. 
In the second formula \eqref{eq:phi mstc inv} $\mstc_\beta$ is a statistic generalising the known statistic $\mstc$ of \cite{Carnevale} (see Definitions \ref{def:mstc} and \ref{def:mstc beta}). In the third formula \eqref{eq:phi binom} the sum is over integer compositions $\kappa=(\kappa_1\ldots \kappa_n)$ and we set $\kappa_{n+1}=0$. The integers $\lambda_i$ are defined by 
\begin{align*}
\lambda_i=\gamma_{i+1}+\cdots+\gamma_{n+1}-\beta_i, \qquad i=1\ldots n 
\end{align*}

In particular, the identity \eqref{eq:phi binom} can be used to derive a branching formula for the function $\phi_{\alpha,\beta}(x;q)$. Set 
$\alpha^{(r)}=(\alpha_0+\alpha_n-\beta_n, \alpha_1+r\ldots\alpha_{n-1}+r)$ and $\beta^{(r)}=(\beta_1+r\ldots \beta_{n-1}+r)$ then
\begin{align}
\label{eq:hgf branching intro}
\phi_{\alpha,\beta}(x;q)=\sum_{r= 0}^{\max(\alpha_0,\alpha_n)-\beta_n}(xq^{\alpha_0^{(r)}+1})^r
\frac{(q^{\beta_n-\alpha_0};q)_r (q^{\beta_n-\alpha_n};q)_r}{(q;q)_r(q^{\beta_n+1};q)_r}\prod_{i=1}^{n-1}\frac{(q^{\alpha_i+1};q)_r}
     {(q^{\beta_i+1};q)_r}
\phi_{\alpha^{(r)},\beta^{(r)}}(x;q)
\end{align}
The three formulas \eqref{eq:phi des maj}, \eqref{eq:phi mstc inv}, \eqref{eq:phi binom} and the property \eqref{eq:hgf branching intro} imply that the function $\phi_{\alpha,\beta}(x;q)$ has a rich combinatorial structure and deserves further study.

We note that the hypergeometric function $\phi_{\alpha,\beta}(x;q)$ can be written in terms of the following normalised binomial series
\begin{align}
\label{eq:def Phi}
\Phi_{\alpha,\beta}(x;q) : = 
(x;q)_{N+1} \sum_{k\geq 0}x^k \qbinom{k+\alpha_0,k}\prod_{i=1}^{n}\qbinom{k+\alpha_i,k+\beta_i}
\end{align}
This function is related to the hypergeometric function $\phi_{\alpha,\beta}(x;q)$ by the formula
\begin{align}
\label{eq:hgf binomial}
\Phi_{\alpha,\beta}(x;q) =\mathcal N(x,q) \phi_{\alpha,\beta}(x;q)
\end{align}
In the rest of the paper we will work with the function $\Phi_{\alpha,\beta}(x;q)$ instead of $\phi_{\alpha,\beta}(x;q)$ since this function is a generalisation of the function 
\begin{align}
\label{eq:def Phi alpha}
\Phi_{\alpha}(x;q) : = 
(x;q)_{\alpha_0+\cdots + \alpha_n+1} \sum_{k\geq 0}x^k \prod_{i=0}^{n}\qbinom{k+\alpha_i,k}
\end{align}
which has been widely studied starting from MacMahon \cite{MacMahon}. Below we give more details regarding the three formulas \eqref{eq:phi des maj}, \eqref{eq:phi mstc inv} and \eqref{eq:phi binom}. 

\subsection{The generalised MacMahon identity}\label{sec:MacMahon}
The formula \eqref{eq:phi des maj} when expressed via $\Phi_{\alpha,\beta}(x;q)$ can be viewed as a generalisation of the MacMahon identity \cite{MacMahon}. 

Consider a composition of positive integers $\alpha=(\alpha_1\ldots \alpha_n)$ and let $S_\alpha$ be the set of all permutations of the multiset $\{1^{\alpha_1}\ldots n^{\alpha_n}\}$. Set $M=\alpha_1+\cdots + \alpha_n$. 

\begin{defn}\label{def:classical statistics}
For a word $w\in S_\alpha$ define
\begin{align}\label{eq:classical sets}
\Des(w)&=\{i:1\leq i<M,\ w_i>w_{i+1}\}\\
\Des^c(w)&=\{i:1\leq i<M,\ w_i\leq w_{i+1}\}\\
\Inv(w)&=\{(i,j):1\leq i<j\leq M,\ w_i>w_j\}
\end{align}
The corresponding statistics are
\begin{align}\label{eq:classical statistics}
\des(w)=|\Des(w)|,
\qquad
\maj(w)=\sum_{i\in\Des(w)}i,
\qquad
\inv(w)=|\Inv(w)|
\end{align}
\end{defn}

The MacMahon identity \cite{MacMahon} for multiset permutations reads
\begin{align}\label{eq:Mac identity}
(x;q)_{M+1} \sum_{k\geq 0}x^k \prod_{i=1}^{n}\qbinom{k+\alpha_i,k}
=
\sum_{w\in S_\alpha} x^{\des(w)} q^{\maj(w)}
\end{align}

One may ask if it is possible to add the bottom integer parameters $(\beta_1\ldots \beta_n)$ (with $\beta_i\leq \alpha_i$ and $K=\beta_1+\cdots+ \beta_n$) inside the binomial coefficients on the left hand side
\begin{align*}
(x;q)_{M-K+1}  \sum_{k\geq 0}x^k \prod_{i=1}^{n}\qbinom{k+\alpha_i,k+\beta_i}
\end{align*}
compute the infinite sum and again obtain a positive polynomial with powers of $x$ and $q$ having some meaning similar to \eqref{eq:Mac identity}. The smallest non-trivial example shows that the resulting polynomial has a negative coefficient
\begin{align*}
(1-x)(1-qx)\sum_{k\geq 0}x^k\qbinom{k+2,k+1}=\frac{(1-x)(1-qx)}{1-q}
 \left(\frac{1}{1-x}-\frac{q^2}{1-qx}\right)
 =1+q-qx
\end{align*}
The positivity can be restored by inserting another binomial with the pair $(\alpha_0,\beta_0)$ satisfying the conditions $\alpha_0\geq \max(\beta)$  and $\beta_0=0$
\begin{align*}
(x;q)_{M-K+\alpha_0+1} \sum_{k\geq 0}x^k \qbinom{k+\alpha_0,k}\prod_{i=1}^{n}\qbinom{k+\alpha_i,k+\beta_i}
\end{align*}
This infinite series can be summed and the result can be written as a sum over the set $S_\delta$ of the multiset  permutations of $\{0^{\delta_0}\ldots n^{\delta_n}\}$ where $\delta_i=\alpha_i-\beta_i$, $i=0\ldots n$. The new statistics $\des_\beta$ and $\maj_\beta$ (generalising $\des$ and $\maj$) now must depend on the parameters $\beta_1\ldots \beta_n$ and the letters $0$ in $w$ must play a distinguished role. 
\begin{defn}\label{def:beta statistics}
For $w\in S_\delta$ define a word $\widehat w$ by
\begin{align}\label{eq:hat word}
\widehat w_i=
\begin{cases}
|\{j<i:w_j=0\}|, & w_i=0\\
\beta_{w_i}, & w_i>0
\end{cases}
\end{align}
and set
\begin{align}\label{eq:D beta}
\Des_\beta(w)&=\Des(w)\cap\Des^c(\widehat w)\\
\Inv_\beta(w)&=\Inv(w)\cap\Inv(\widehat w)
\end{align}
The corresponding statistics are
\begin{align}\label{eq:beta statistics}
\des_\beta(w)=|\Des_\beta(w)|,
\qquad
\maj_\beta(w)=\sum_{i\in\Des_\beta(w)}i,
\qquad
\inv_\beta(w)=|\Inv_\beta(w)|
\end{align}
\end{defn}

For a given word $w$ the word $\widehat w$ can be read directly from it. For example
\[
\begin{array}{c@{\;}ccccccc}
w={}&2&0&1&2&0&3&0\\
\widehat w={}&\beta_2&0&\beta_1&\beta_2&1&\beta_3&2
\end{array}
\]
We present the formula \eqref{eq:phi des maj} as the following statement.
\begin{thm}\label{thm:beta Mac}
Let $\alpha=(\alpha_0,\alpha_1\ldots \alpha_n)$ and $\beta=(\beta_1\ldots \beta_n)$ be two collections of non-negative integers satisfying:
\begin{align}
\label{eq:condition 1}
&\beta_i\geq \beta_{i+1}, \qquad \text{for } i=1\ldots n-1\\
\label{eq:condition 2}
&\alpha_i\geq \beta_{i}, \qquad \text{for } i=1\ldots n,
\qquad \alpha_0\geq \max(\beta)
\end{align}
Set $\delta=(\delta_0\ldots \delta_n)$, where $\delta_0 =\alpha_0$ and $\delta_i=\alpha_i-\beta_i$, $(i=1\ldots n)$. Let $N=\delta_0+\cdots +\delta_n$ and $S_\delta$ be the corresponding set of multiset permutations, then we have
\begin{align}\label{eq:beta Mac identity}
(x;q)_{N+1} \sum_{k\geq 0}x^k \qbinom{k+\alpha_0,k}\prod_{i=1}^{n}\qbinom{k+\alpha_i,k+\beta_i}
=
\sum_{w\in S_{\delta}} x^{\normalfont{\des}_\beta(w)} q^{\normalfont{\maj}_\beta(w)+\inv_\beta(w)}
\end{align}
\end{thm}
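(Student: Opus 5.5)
The plan is to prove \eqref{eq:beta Mac identity} by the standard ``barred word'' (compartment) argument behind MacMahon's identity \eqref{eq:Mac identity}, adapted to the bottom parameters $\beta$. Dividing both sides by $(x;q)_{N+1}$ and expanding $1/(x;q)_{N+1}=\sum_{m\ge0}x^m\qbinom{m+N,N}$, the identity becomes an equality of coefficients of $x^k$:
\begin{align*}
\qbinom{k+\alpha_0,k}\prod_{i=1}^{n}\qbinom{k+\alpha_i,k+\beta_i}
=\sum_{w\in S_\delta}q^{\maj_\beta(w)+\inv_\beta(w)}\qbinom{k-\des_\beta(w)+N,N}.
\end{align*}
I would interpret the left side combinatorially. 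Write $\qbinom{k+\alpha_i,k+\beta_i}$ as the generating function of words in letters $i$ and a separator, or equivalently of weakly increasing sequences. The factor $\qbinom{k+\alpha_0,k}$ is the generating function of placing $k$ bars among $\alpha_0$ zeros, which matches the classical MacMahon picture with $k$ compartments. For $i\ge1$ I would use $\qbinom{k+\alpha_i,k+\beta_i}=\qbinom{k+\alpha_i,\alpha_i-\beta_i}$, i.e.\ the distribution of the $\delta_i$ letters $i$ into $k+\beta_i+1$ slots. The idea is that the extra $\beta_i$ slots are made available by the zeros: a letter $i$ may sit ``between'' zeros numbered up to $\beta_i$ inside a compartment. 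This is exactly what $\widehat w$ encodes, since a zero's hat value counts the preceding zeros and a letter $i$ has hat value $\beta_i$. The hypothesis $\alpha_0\ge\max(\beta)$ guarantees enough zeros to provide these slots.

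The core step is then a bijection, in the style of Gessel/Garsia, between these objects (barred arrangements counted by the left side) and pairs $(w,\text{bar placement})$ with $w\in S_\delta$. The constraint is that bars are forced at the positions in $\Des_\beta(w)$, and the remaining $k-\des_\beta(w)$ bars are distributed freely among $N+1$ gaps, which produces the factor $\qbinom{k-\des_\beta(w)+N,N}$. The weights then have to match: each forced bar in gap $i$ contributes $q^{i}$, accounting for $\maj_\beta$, while the $\inv_\beta$ term records how letters $i$ are interleaved with zeros and other letters having smaller hat value. Within a compartment, letters are not sorted totally but according to the pair $(\widehat w,w)$. A descent $w_j>w_{j+1}$ with $\widehat w_j>\widehat w_{j+1}$ is then allowed inside a compartment, and that is why it is excluded from $\Des_\beta$ and instead contributes an inversion. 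Monotonicity of $\beta$ (\eqref{eq:condition 1}) should ensure that the ordering by $\widehat w$ is compatible with the ordering of the letters $1,\dots,n$, so that the within-compartment normal form is unique.

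The main obstacle will be making this bijection and its weight bookkeeping precise, in particular verifying that $\inv_\beta$ exactly accounts for the $q$-weight of the $\beta_i$ extra slots. I would attack it by induction on $n$, adding the letters $n$ (the smallest $\beta$) last, via an insertion argument. Inserting $\delta_n$ copies of $n$ into a word of $S_{\delta'}$ should change the generating function by the factor $\qbinom{k+\alpha_n,k+\beta_n}$ at fixed $k$. This is a $q$-Vandermonde / insertion lemma: the insertion positions split according to whether they create a new $\beta$-descent, and one then checks the $q$-count. If the bijection proves unwieldy, the fallback is an algebraic route. First verify the base case $n=0$, which is the classical $\sum_{w}x^{\des}q^{\maj}$ for the word $0^{\alpha_0}$, i.e.\ $1$, consistent with $(x;q)_{\alpha_0+1}\sum_k x^k\qbinom{k+\alpha_0,k}=1$. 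Then establish a recursion in $\delta_n$ for both sides, using $\qbinom{k+\alpha_n,k+\beta_n}=\qbinom{k+\alpha_n-1,k+\beta_n-1}+q^{k+\beta_n}\qbinom{k+\alpha_n-1,k+\beta_n}$, and check that the right side satisfies the same recursion by removing the last occurrence of the letter $n$.
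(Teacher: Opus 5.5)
Your main line is essentially the paper's proof. You fix $k$ bars and insert the $\alpha_0$ zeros to get $\qbinom{k+\alpha_0,k}$. The first $\beta_i$ zeros then act as extra separators with no $x$-weight, so the $\delta_i$ letters $i$ are spread over $k+\beta_i+1$ compartments with a unique slot in each; this is where $\beta_1\geq\cdots\geq\beta_n$ is used. On the other side, the forced bars at $\Des_\beta(w)$ give $x^{\des_\beta(w)}q^{\maj_\beta(w)}$ and the zero weights give $q^{\inv_\beta(w)}$, because monotonicity of $\beta$ forces every pair in $\Inv_\beta(w)$ to be a letter $a>0$ followed by one of the first $\beta_a$ zeros.

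The only real difference is the insertion order. The paper inserts $n,n-1,\ldots,1$ after the zeros, so the set of $0$-bars only grows. Your induction on $n$ inserts the letters $n$ last, which works equally well: the copies of $n$ in a compartment must form a block at its right end, since an $n$ can only be followed by another $n$, an ordinary bar, or one of the first $\beta_n$ zeros.
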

Recall the definition \eqref{eq:def Phi} of $\Phi_{\alpha,\beta}(x;q)$ and \eqref{eq:def Phi alpha} of $\Phi_{\alpha}(x;q)$. We write the MacMahon identity \eqref{eq:Mac identity} for $\alpha=(\alpha_0\ldots \alpha_n)$ and its generalisation \eqref{eq:beta Mac identity} as
\begin{align}
\label{eq:beta Phi des maj}
\Phi_{\alpha,\beta}(x;q) &= \sum_{w\in S_{\delta}} x^{\normalfont{\des}_\beta(w)} q^{\normalfont{\maj}_\beta(w)+\inv_\beta(w)}\\
\label{eq:Phi des maj}
\Phi_{\alpha}(x;q) &= \sum_{w\in S_{\alpha}} x^{\normalfont{\des}(w)} q^{\normalfont{\maj}(w)}
\end{align}


\subsection{The $(\mstc_\beta,\inv)$-formula}\label{sec:mstc inv}
A pair of statistics with the same joint distribution as $(\des,\maj)$ is called Euler--Mahonian. Such pairs have been studied on permutations and words \cite{Foata,Clarkeetal,Skandera}. Carnevale introduced the statistic $\mstc$ on multiset permutations and proved that $(\mstc,\inv)$ is Euler--Mahonian \cite{Carnevale} thus providing another formula for $\Phi_{\alpha}(x;q)$. Our second formula \eqref{eq:phi mstc inv} is equivalent to the analogue of Carnevale's result for $\Phi_{\alpha,\beta}(x;q)$.

Let $\alpha=(\alpha_1\ldots \alpha_n)$ be a non-negative integer composition and $M=\alpha_1+\cdots + \alpha_n$. Let $S_\alpha$ be the set of multiset permutations as before and $S_{M}$ the standard symmetric group on $M$ letters. 
\begin{defn}\label{def:mstc}
For $w\in S_\alpha$ define standardisation $\std(w)\in S_{M}$ by assigning the integers $1\ldots M$ to the letters of $w$ in increasing order and to equal letters from left to right (see Example \ref{ex:mstc beta}). For $\sigma\in S_M$ define its Lehmer code $\code(\sigma)=(c_1\ldots c_{M})$ by
\begin{align}\label{eq:Lehmer code}
c_i=|\{j:i<j\leq M,\ \sigma_i>\sigma_j\}|
\end{align}
For an integer word $c=(c_1\ldots c_M)$ set
\begin{align}\label{eq:st}
\st(c)
=
\max\bigl\{r\in\{0\ldots M\}:\exists\ 1\leq i_1<\cdots<i_r\leq M,\ c_{i_j}>r-j\text{ for }j=1\ldots r\bigr\}
\end{align}
and define
\begin{align}\label{eq:mstc}
\mstc(w):=\st\bigl(\code((\std(w))^{-1})\bigr)
\end{align}
\end{defn}

Carnevale's identity gives a second expression for $\Phi_{\alpha}(x;q)$
\begin{align}\label{eq:Phi mstc inv}
\Phi_{\alpha}(x;q)
=
\sum_{w\in S_\alpha}x^{\mstc(w)}q^{\inv(w)}
\end{align}

Now we define a $\beta$-analogue of $\mstc$. Let $\alpha=(\alpha_0,\alpha_1\ldots \alpha_n)$ and $\beta=(\beta_1\ldots \beta_n)$ be two non-negative integer compositions with $\alpha_i\geq\beta_i$ for $i=1\ldots n$ and set also $\beta_{n+1}=0$. Define $\gamma=(\gamma_1\ldots \gamma_{n+1})$ by 
\begin{align}
\label{eq:gamma}
\gamma_{n+1}=\alpha_0,\qquad 
\gamma_i=\alpha_i-\beta_i, \hspace{0.5cm} i=1\ldots n
\end{align}
and $N=\gamma_1+\cdots + \gamma_{n+1}$. Let $S_\gamma$ be the set of multiset permutations of $\{1^{\gamma_1}\ldots (n+1)^{\gamma_{n+1}}\}$ and $S_{N}$ the standard symmetric group on $N$ letters. 
\begin{defn}\label{def:mstc beta}
We denote by $\beta^\gamma$ the following word
\begin{align}\label{eq:beta^gamma}
\beta^\gamma:=(\beta_1^{\gamma_1}\ldots \beta_n^{\gamma_n},0^{\gamma_{n+1}})
\end{align}
and define
\begin{align}\label{eq:mstc beta}
\mstc_\beta(w):=\st\left(\code((\std(w))^{-1})-\beta^\gamma\right)
\end{align}
\end{defn}

\begin{ex}\label{ex:mstc beta}
Let $\alpha=(1,4,3)$, $\beta=(1,1)$ then $\gamma=(3,2,1)$. For $w=312211$ we have
\begin{align*}
\std(w)=614523
\qquad
(\std(w))^{-1}=256341
\end{align*}
and
\begin{align*}
c=\code\bigl((\std(w))^{-1}\bigr)
=133 11 0
\end{align*}
Thus $\inv(w)=c_1+\cdots +c_6=9$ and $\st(c)=3$ since there is a subword $331$ which is element-wise strictly greater than $210$. We compute $\beta^\gamma$
\begin{align*}
\beta^\gamma=(\beta_1^{\gamma_1},\beta_2^{\gamma_2},\beta_3^{\gamma_3})=(111110)
\end{align*}
Then $c-\beta^\gamma$ gives us the sequence $(022000)$ for which $\st$ is equal to $2$. Therefore $\mstc_\beta(w)=2$.
\end{ex}
\begin{thm}\label{thm:mstc inv}
Let $\alpha=(\alpha_0,\alpha_1\ldots \alpha_n)$ and $\beta=(\beta_1\ldots \beta_n)$ be two collections of integers satisfying:
\begin{align}
\label{eq:condition alpha beta mstc}
\alpha_0\geq \max(\beta),
\qquad \alpha_i\geq \beta_{i}\geq 0,
\qquad 
\text{for } i=1\ldots n
\end{align}
Set $\gamma_i=\alpha_i-\beta_i$ for $i=1\ldots n$ and $\gamma_{n+1}=\alpha_0$, then we have
\begin{align}\label{eq:mstc beta inv}
\Phi_{\alpha,\beta}(x;q)
=
 \sum_{w\in S_\gamma} x^{\normalfont{\mstc}_\beta(w)} q^{\normalfont{\inv}(w)}
\end{align}
\end{thm}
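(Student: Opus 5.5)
We plan to prove Theorem \ref{thm:mstc inv} by passing from words to Lehmer codes and then computing a generating function over codes. The map $w\mapsto c=\code((\std(w))^{-1})$ is a bijection from $S_\gamma$ onto an explicit set $C_\gamma$ of integer sequences $c=(c_1\ldots c_N)$. Indeed, $(\std(w))^{-1}$ lists the positions of the letters of $w$ in standardised order. The letters equal to $j$ therefore occupy a block $B_j$ of consecutive indices of length $\gamma_j$, and within $B_j$ the positions increase. This forces $c$ to be weakly decreasing inside each block, with $c_i\leq N-i$, and every such sequence arises. Moreover $\inv(w)=c_1+\cdots+c_N$, as already observed in Example \ref{ex:mstc beta}. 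In these coordinates the right hand side of \eqref{eq:mstc beta inv} becomes
\begin{align*}
\sum_{c\in C_\gamma} x^{\st(c-\beta^\gamma)} q^{|c|}.
\end{align*}
Since $\beta^\gamma$ is constant equal to $\beta_j$ on $B_j$, the shifted sequence $c-\beta^\gamma$ is still weakly decreasing on blocks.

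The main tool will be a Stanley-type identity of the form
\begin{align*}
\sum_{c} x^{\st(c')} q^{|c|}/(x;q)_{N+1}=\sum_{k\geq 0}x^k(\cdots),
\end{align*}
where $c'=c-\beta^\gamma$. We expect to establish it by the standard ``$\st$'' characterisation: $\st(c')\leq k$ if and only if a certain greedy condition holds. Equivalently, the number of pairs consisting of $c$ and a weakly increasing barred sequence with total $k$ is $\sum_{k}x^k\cdot\#\{c:\st(c')\leq k\}$, weighted appropriately. More concretely, we will prove
\begin{align*}
\frac{1}{(x;q)_{N+1}}\sum_{c\in C_\gamma}x^{\st(c-\beta^\gamma)}q^{|c|}=\sum_{k\ge0}x^k F_k(q),
\end{align*}
where $F_k(q)$ is the $q$-count of pairs $(c,\mu)$ with $\mu$ a partition-like weight. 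This is obtained by decomposing each $c$ with $\st(c')=s\leq k$ together with $k-s$ ``free'' increments distributed along $N+1$ slots.

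The key combinatorial step is to identify $F_k(q)$ with $\qbinom{k+\alpha_0,k}\prod_i\qbinom{k+\alpha_i,k+\beta_i}$. We will do this block by block, processing $j=n+1,n,\ldots,1$ from the right. Block $B_j$ has length $\gamma_j$, and its entries are bounded by $k$ plus the number of later positions, shifted by $\beta_j$. A weakly decreasing sequence of length $\gamma_j$ in a box yields a $q$-binomial. The shift by $\beta_j$ together with the requirement that entries of $c$ be at least $0$ (not just $c'\geq 0$) turns the naive count $\qbinom{k+\gamma_j+\cdots,\gamma_j}$ into $\qbinom{k+\alpha_j,k+\beta_j}$. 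The telescoping of box sizes across blocks is what produces the factor $\qbinom{k+\alpha_0,k}$ from the last block. The hypothesis $\alpha_0\geq\max(\beta)$ is used to guarantee that the last block is long enough, so that negative entries of $c-\beta^\gamma$ never affect $\st$.

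The main obstacle is making this characterisation precise: showing that $\#\{c\in C_\gamma:\st(c-\beta^\gamma)\leq k\}$, $q$-weighted, equals the truncated product, i.e.\ $\sum_{s\leq k}$ of the coefficients on the left after multiplying by $1/(x;q)_{N+1}$. We would try first to prove the bijective statement ``$\st(c')\le k$ iff $c$ fits in a staircase-shifted box'' by induction on $N$, removing the last letter of $w$ (the last entry of $c$) and tracking how $\st$ changes.

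If that fails, a fallback is available. We would verify that both sides satisfy the same recursion in $\gamma_{n+1}$, or equivalently the branching relation \eqref{eq:hgf branching intro}, with matching base cases. The base case $\beta=0$ is Carnevale's identity \eqref{eq:Phi mstc inv}.
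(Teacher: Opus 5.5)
Your route differs from the paper's: you work directly with a generating function over Lehmer codes, while the paper inducts by inserting the smallest letter. As written, though, it has a genuine gap. Everything rests on identifying $F_k(q)$ with $\qbinom{k+\alpha_0,k}\prod_i\qbinom{k+\alpha_i,k+\beta_i}$, and you give no argument for this. Since $1/(x;q)_{N+1}=\sum_{j\geq 0}x^j\qbinom{N+j,N}$, the statement you actually need is
\[
\sum_{c\in C_\gamma}q^{|c|}\qbinom{N+k-\st(c-\beta^\gamma),N}
=\qbinom{k+\alpha_0,k}\prod_{i=1}^{n}\qbinom{k+\alpha_i,k+\beta_i},\qquad k\geq 0 .
\]
The characterisation you plan to prove first (``$\st(c-\beta^\gamma)\leq k$ iff $c$ fits in a shifted box'') targets a different quantity. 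It would compute the $q$-weighted count of $\{c:\st(c-\beta^\gamma)\leq k\}$, which is the coefficient of $x^k$ in $(1-x)^{-1}\sum_c x^{\st(c-\beta^\gamma)}q^{|c|}$, not in the series you need. For example, take $\alpha=(1,2)$, $\beta=(1)$, so $\gamma=(1,1)$. Both words $12$ and $21$ have $\mstc_\beta=0$, so that count is $1+q$ for every $k$, whereas the product $\qbinom{k+1,k}\qbinom{k+2,k+1}$ already differs at $k=1$.

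Your block-by-block picture also does not match the target. The factor for block $B_j$ counts weakly monotone sequences of length $\gamma_j$ with entries in $[0,k+\beta_j]$ (in $[0,k]$ for the block of $n+1$), with no dependence on the number of later positions. So the constraints $c_i\leq N-i$ would have to be traded against the $k-\st$ free bars by an explicit bijection. You would also need to explain how $\st$ of the whole sequence, which couples all blocks, splits block by block. Neither is supplied, and the remarks about telescoping and nonnegativity of $c$ are not a mechanism.

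Two further details show the mechanism is not yet right. First, the code of $\sigma=(\std(w))^{-1}$ is weakly \emph{increasing} on each block, not decreasing: $\sigma$ increases on $B_j$, and $\sigma_i<\sigma_{i+1}$ forces $c_i\leq c_{i+1}$ (in Example~\ref{ex:mstc beta}, $c=133\,|\,11\,|\,0$). Since $\st$ is order-sensitive, this matters. Second, the role you give to $\alpha_0\geq\max(\beta)$ is vacuous: entries $\leq 0$ never occur in a witness for $\st$, because one needs $c_{i_j}>r-j\geq 0$. Yet the hypothesis is essential. For $\alpha=(0,2)$, $\beta=(1)$ the word side equals $1$ while $\Phi_{\alpha,\beta}=1+q-qx$, so the hypothesis must enter exactly in the missing step.

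The fallback does not close the gap either. The branching relation \eqref{eq:Phi branching} lowers $n$ while shifting the remaining $\beta_i$ up by $r$. Its base case is therefore the trivial $n=0$, not Carnevale's $\beta=0$ identity. Checking that the word side obeys it would need a combinatorial decomposition of $(\mstc_\beta,\inv)$ that you have not described.

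That decomposition is the heart of the paper's proof. It takes the coefficient recursion \eqref{eq:Psi coefficient recursion} from Lemma~\ref{lem:coefficient transformation} and realises it by inserting the $\gamma_m$ copies of the smallest letter $m$ into $v\in S_{\gamma^{(m+1)}}$. Lemma~\ref{lem:w} produces the $q$-binomial factors as shuffle inversion counts. Lemma~\ref{lem:mstc =k} shows $\mstc_\beta(w)=\mstc_\beta(v)+l$, where $l$ is the number of copies of $m$ after position $\alpha_m+\mstc_\beta(v)$. This uses that the new code entries $p_i-i-\beta_m$ form an initial run of entries $\leq k$ followed by a run of entries $\geq k$. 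The hypothesis $\alpha_0\geq\max(\beta)$ enters there through $\mstc_\beta(v)\leq\lambda_m$, and through the degree bound needed in Lemma~\ref{lem:coefficient transformation}.
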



\subsection{The finite binomial formula}
We present the third formula \eqref{eq:phi binom} in terms of the normalised series $\Phi_{\alpha,\beta}(x;q)$. This formula is given by a finite sum of products of $q$-binomial coefficients and thus is manifestly polynomial and positive.
\begin{thm}\label{thm:qbinomial}
Let $\alpha=(\alpha_0,\alpha_1\ldots \alpha_n)$ and $\beta=(\beta_1\ldots \beta_n)$ be two collections of integers satisfying:
\begin{align}
\label{eq:condition alpha beta}
\alpha_0\geq \max(\beta),
\qquad \alpha_i\geq \beta_{i}\geq 0,\qquad i=1\ldots n
\end{align}
Set $\gamma_i=\alpha_i-\beta_i$ for $i=1\ldots n$ and $\gamma_{n+1}=\alpha_0$, define further
\begin{align}\label{eq:lambda}
\lambda_i=\gamma_{i+1}+\cdots+\gamma_{n+1}-\beta_i,\qquad i=1\ldots n
\end{align}
then we have
\begin{align}\label{eq:qbinomial formula}
\Phi_{\alpha,\beta}(x;q)
=
\sum_{\kappa}
x^{\kappa_1}
\prod_{i=1}^n
q^{(\kappa_i-\kappa_{i+1})(\beta_i+\kappa_i)}
\qbinom{\alpha_i+\kappa_{i+1},\beta_i+\kappa_i}
\qbinom{\lambda_{i}-\kappa_{i+1},
\lambda_{i}-\kappa_{i}}
\end{align}
where the sum is over integer compositions $\kappa=(\kappa_1\ldots \kappa_n)$  and $\kappa_{n+1}=0$.
\end{thm}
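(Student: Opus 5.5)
The plan is induction on $n$, peeling off the pair $(\alpha_n,\beta_n)$ through a finite $q$-Chu--Vandermonde type expansion of the last binomial. The right-hand side of \eqref{eq:qbinomial formula} has a nested shape: the factor with index $i=n$ involves only $\kappa_n$ (since $\kappa_{n+1}=0$), and the remaining factors depend on $(\kappa_1\ldots\kappa_{n-1})$ and couple to $\kappa_n$ only through $\alpha_{n-1}+\kappa_n$ and $\lambda_{n-1}-\kappa_n$. This suggests comparing with the branching formula \eqref{eq:hgf branching intro}, in which $\alpha_i\mapsto\alpha_i+r$, $\beta_i\mapsto\beta_i+r$ for $i<n$ and $\alpha_0\mapsto\alpha_0+\alpha_n-\beta_n$.

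\textbf{Key steps.}

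1. Expand the last binomial for each $k$ by $q$-Vandermonde:
\begin{align*}
\qbinom{k+\alpha_n,k+\beta_n}=\sum_{r}q^{(\cdots)}\qbinom{\alpha_n-\beta_n,\alpha_n-\beta_n-r}\qbinom{k+\beta_n,k+\beta_n-r}.
\end{align*}
Combine the factor $\qbinom{k+\beta_n,r}$ with $\qbinom{k+\alpha_0,k}$ and the other binomials, using the identity $\qbinom{k+a,k+b}\qbinom{k+b,r}=\qbinom{k+a,r}\qbinom{k+a-r,k+b-r}$.

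2. After shifting $k\to k-r$ (the index the new summand lives on), the inner series should become $\Phi$ with parameters $\alpha^{(r)},\beta^{(r)}$, one fewer pair, multiplied by an explicit $x^r$ and $q$-power and a finite product of binomials in $r$. The number $N$ is preserved: $N^{(r)}=(\alpha_0+\gamma_n)+\sum_{i<n}\gamma_i=N$. So the normalising factor $(x;q)_{N+1}$ passes through unchanged, and the conditions \eqref{eq:condition alpha beta} are inherited by $\alpha^{(r)},\beta^{(r)}$.

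3. Apply the induction hypothesis to $\Phi_{\alpha^{(r)},\beta^{(r)}}$, producing a sum over $\kappa'=(\kappa'_1\ldots\kappa'_{n-1})$. Re-index by $\kappa_i=\kappa'_i+r$ for $i<n$ and $\kappa_n=r$, so that $x^{\kappa'_1}x^{r}=x^{\kappa_1}$.

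4. Verify that the new $\lambda$'s satisfy $\lambda^{(r)}_i=\lambda_i$ up to the shifts $\pm r$, and that $\beta^{(r)}_i+\kappa'_i$ and $\kappa'_i-\kappa'_{i+1}$ reproduce the factors in \eqref{eq:qbinomial formula} for $i\le n-1$. The leftover $r$-dependent factor must equal the $i=n$ term
\begin{align*}
q^{\kappa_n(\beta_n+\kappa_n)}\qbinom{\alpha_n,\beta_n+\kappa_n}\qbinom{\lambda_n,\lambda_n-\kappa_n}, \qquad \lambda_n=\alpha_0-\beta_n,
\end{align*}
together with the correction to the $i=n-1$ factor coming from $\kappa_n$.

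5. The base case $n=0$ is $\Phi_{(\alpha_0)}=(x;q)_{\alpha_0+1}\sum_k x^k\qbinom{k+\alpha_0,k}=1$, by the $q$-binomial theorem. This matches the empty sum/product. Alternatively, check $n=1$ directly.

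\textbf{Main obstacle.} The hardest part is step 1--2: choosing the right Vandermonde decomposition so that the $r$-dependent prefactor comes out as a product of two honest $q$-binomials $\qbinom{\alpha_n,\beta_n+r}\qbinom{\alpha_0-\beta_n,\alpha_0-\beta_n-r}$. This is in place of the hypergeometric-looking coefficient in \eqref{eq:hgf branching intro}, which contains $(q^{\beta_n-\alpha_0};q)_r(q^{\beta_n-\alpha_n};q)_r$ and the ratios $(q^{\alpha_i+1};q)_r/(q^{\beta_i+1};q)_r$.

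The factor $\prod_{i<n}(q^{\alpha_i+1};q)_r/(q^{\beta_i+1};q)_r$ is exactly the ratio $\prod_{i<n}\qbinom{\alpha_i,\beta_i}/\qbinom{\alpha_i+r,\beta_i+r}$ relating the normalisations $\mathcal N$. I would therefore pass to $\Phi$ via \eqref{eq:hgf binomial} and check that the sign and $q$-power of $(q^{-m};q)_r=(-1)^rq^{\binom r2-mr}(q;q)_m/(q;q)_{m-r}$ combine with $(xq^{\alpha_0^{(r)}+1})^r$ to give $x^r q^{r(\beta_n+r)}$ times the stated binomials. A further subtlety is that the induction is needed in the slightly more general form where the product runs with $\alpha_{n-1}+\kappa_n$ in place of $\alpha_{n-1}$. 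This is handled precisely because $\alpha^{(r)}_{n-1}=\alpha_{n-1}+r$.
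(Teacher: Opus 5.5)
Your Steps 3--5 are sound. They are the paper's Remark~\ref{rmk:Phi branching} run backwards: $\kappa_n=r$, $\kappa_i=\kappa'_i+r$, $\lambda^{(r)}_i=\lambda_i-r$, $N^{(r)}=N$, and the hypotheses are inherited for the $r\leq\gamma_n$ that contribute. This is a different architecture from the paper's. The paper keeps $\alpha_0$ fixed and adjoins the pairs $(\alpha_m,\beta_m)$ one at a time through Lemma~\ref{lem:coefficient transformation}, which holds for an arbitrary series $u_k$ under a degree hypothesis and is proved by triangular inversion plus $q$-Pfaff--Saalsch\"utz.

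\textbf{The gap.} All the $q$-series content of your argument sits in the branching formula \eqref{eq:Phi branching}, and you never prove it. Step~1 as written cannot produce it:
\begin{itemize}
\item The Vandermonde coefficients $\qbinom{\gamma_n,r}$ do not involve $\alpha_0$, while the target coefficient contains $\qbinom{\alpha_0-\beta_n,r}$.
\item The leftover product $\qbinom{k+\alpha_0,k}\qbinom{k+\beta_n,r}$ contains no factor $\qbinom{k+a,k+\beta_n}$, so your revision identity has nothing to act on.
\item The better form $\qbinom{k+\alpha_n,\gamma_n}=\sum_j q^{j(\beta_n+j)}\qbinom{\alpha_n,\beta_n+j}\qbinom{k,j}$ gives $\qbinom{k+\alpha_0,k}\qbinom{k,j}=\qbinom{\alpha_0+j,j}\qbinom{k+\alpha_0,k-j}$. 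After the shift this is a series with top parameter $\alpha_0+j$, not $\alpha_0+\gamma_n$, carrying a signed prefactor $(xq^{N-\gamma_n+j+1};q)_{\gamma_n-j}$. So the inner series is not $\Phi_{\alpha^{(r)},\beta^{(r)}}$, and a second summation is unavoidable.
\end{itemize}
Your fallback of reading the coefficients off \eqref{eq:hgf branching intro} is circular here. The paper derives that formula from the very theorem you are proving, unless you import the external Srivastava--Jain identity.

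\textbf{How to close it.} Divide by $(x;q)_{N+1}$ and compare coefficients of $x^K$. In $x^r\Phi_{\alpha^{(r)},\beta^{(r)}}/(x;q)_{N+1}$ this coefficient is $\qbinom{K-r+\alpha_0+\gamma_n,K-r}\prod_{i<n}\qbinom{K+\alpha_i,K+\beta_i}$, because the shift $k=K-r$ exactly cancels the shift of $\alpha_i,\beta_i$ by $r$. The product over $i<n$ is therefore a common spectator, and \eqref{eq:Phi branching} is equivalent to
\begin{align*}
\qbinom{K+\alpha_0,K}\qbinom{K+\alpha_n,K+\beta_n}
=\sum_{r\geq0}q^{r(\beta_n+r)}\qbinom{\alpha_n,\beta_n+r}\qbinom{\alpha_0-\beta_n,r}\qbinom{K-r+\alpha_0+\gamma_n,K-r},
\qquad K\geq0,
\end{align*}
which is just the case $n=1$ of the theorem. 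That case follows from Heine's transformation ${}_2\phi_1(a,b;c;q,z)=\frac{(abz/c;q)_\infty}{(z;q)_\infty}\,{}_2\phi_1(c/a,c/b;c;q,abz/c)$ with $a=q^{\alpha_0+1}$, $b=q^{\alpha_n+1}$, $c=q^{\beta_n+1}$, $z=x$:
\begin{itemize}
\item the prefactor is $1/(x;q)_{\alpha_0+\gamma_n+1}$;
\item the new series terminates because $\alpha_0,\alpha_n\geq\beta_n$;
\item your $(q^{-m};q)_r$ bookkeeping turns its coefficients, times $\qbinom{\alpha_n,\beta_n}$, into exactly $q^{r(\beta_n+r)}\qbinom{\alpha_n,\beta_n+r}\qbinom{\alpha_0-\beta_n,r}$.
\end{itemize}
Two alternative routes to the same $n=1$ identity are the second-form Vandermonde above followed by a $q$-Chu--Vandermonde evaluation of the alternating sum, or Lemma~\ref{lem:coefficient transformation} with $F=1$.

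\textbf{Comparison once fixed.} Your route is then complete and leaner than the paper's: the transformation is needed only for a two-parameter ${}_2\phi_1$, not for an arbitrary $u_k$. What the paper's route buys in exchange is the coefficient recursion \eqref{eq:Psi coefficient recursion} directly, which it reuses to prove Theorem~\ref{thm:mstc inv}.
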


With the convention \eqref{eq:qbinom} it is easy to see that a summand in \eqref{eq:qbinomial formula} is non-zero only if
\begin{align}\label{eq:kappa conditions}
0\leq \kappa_i-\kappa_{i+1}\leq\gamma_i,
\qquad
\kappa_i\leq \lambda_{i}
\qquad (i=1,\ldots,n)
\end{align}
Therefore the summation over $\kappa$ is finite.
We note that if we order the parameters $\beta$ s.t. $\beta_i\leq\beta_{i+1}$ then this implies an ordering on $\lambda=(\lambda_1\ldots \lambda_n)$, i.e. $\lambda_i\geq \lambda_{i+1}$. Thus $\lambda$ can be viewed as an integer partition. The conditions on $\kappa$ in \eqref{eq:kappa conditions} also impose $\kappa_i\geq \kappa_{i+1}$ and together with $\kappa_i\leq \lambda_{i}$ it means that one can compute \eqref{eq:qbinomial formula} by summing over all integer partitions $\kappa$ whose Young diagrams are contained inside the Young diagram of $\lambda$.


\subsection{Remark on $\Phi_{\alpha,\beta}(x;q)$ in integrable models and symmetric functions}
The problem of finding formulas for $\Phi_{\alpha,\beta}(x;q)$ was originally motivated by a problem of positivity of the $q$-hypergeometric function as stated in \cite{GW}. The authors in \cite{GW} considered a construction of the modified Macdonald polynomials using an integrable lattice model approach \cite{CdGW}. It is known that these symmetric polynomials positively expand in the basis of monomial symmetric functions \cite{Haiman,HHL,AMM1}. The construction of \cite{GW} (see also \cite{JL}) of the modified Macdonald polynomials produced a monomial formula with the coefficients expressed via $\Phi_{\alpha,\beta}(x;q)$ as defined in \eqref{eq:def Phi} with the restriction \eqref{eq: positive regime intro} on the integer arguments. This suggested that $\Phi_{\alpha,\beta}(x;q)$ are positive polynomials in $x$ and $q$ under the conditions \eqref{eq: positive regime intro}. The authors \cite{GW} resummed the series \eqref{eq:def Phi} and obtained a finite $q$-binomial formula which proved the positivity of $\Phi_{\alpha,\beta}(x;q)$. The resulting formula of \cite{GW} however is much more complicated than \eqref{eq:qbinomial formula}. 

Let us briefly explain what $\Phi_{\alpha,\beta}(x;q)$ computes in the context of integrable lattice models. A key object is the so called $L$-matrix. The specific $L$-matrix that was used in \cite{GW} is an object associated to the quantum affine algebra $U_{q}(\widehat{\frak{sl}}_{N})$ but here we will need to discuss the $U_{q}(\widehat{\frak{sl}}_{2})$ case. This matrix is infinite dimensional  with the matrix elements denoted by $L_{\tilde k,k}^{s,r}$ (denoted $\mathcal L_{\rho,\tilde \rho}^{\sigma,\tilde \sigma}$ in \cite{GW}) where $\tilde k,k,s$ and $r$ are non-negative integers. The matrix elements $L_{\tilde k,k}^{s,r}$ vanish if $k+r\neq \tilde k+s$, otherwise\footnote{Another good choice is $L_{\tilde k,k}^{s,r}(z) =\qbinom{\tilde k,r} z^{r}$ (see \cite{Korff,GW}). This choice would lead to a similar conclusion.} 
\begin{align}\label{eq:L mx}
L_{\tilde k,k}^{s,r}(z) =\qbinom{k+r,k} z^{r},
\qquad \text{for} \quad k+r=\tilde k+s
\end{align}
where $z$ is called the spectral parameter\footnote{The spectral parameter gives rise to the alphabet in the symmetric functions which are built from $L$.} and we used a convention in \eqref{eq:L mx} which omits a $q$ factor compared to the conventions of \cite{GW}. These matrix elements have a convenient graphical representation in terms of lattice paths on a square tile. In this representation the paths connect bottom and left edges with top and right edges. The labels $(\tilde k,s,k,r)$ of the matrix elements of the $L$ matrix indicate the numbers of paths touching the corresponding edges, for example
\begin{center}
\vspace{-0.5cm}
\begin{align*}
\begin{tikzpicture}[scale=0.3,baseline=(current bounding box.center)]
\draw (0,0) -- (3,0) -- (3,3) -- (0,3) -- (0,0);
\node[left] at (0,1.5) {\footnotesize $s$};
\node[right] at (3,1.5) {\footnotesize $r$};
\node[below] at (1.5,0) {\footnotesize $\tilde k$};
\node[above] at (1.5,3) {\footnotesize $k$};
 \begin{scope}[line width=0.8pt]
\draw (0,1.5+0.3) -- (1.5-0.1,1.5+0.3) -- (1.5-0.1,3);
\draw (0,1.5+0.1) -- (1.5+0.1,1.5+0.1) -- (1.5+0.1,3);;
\draw (0,1.5-0.1) -- (3,1.5-0.1);
\draw (1.5,0) -- (1.5,1.5-0.3) -- (3,1.5-0.3);
\end{scope}
\end{tikzpicture}
\end{align*}
\end{center}
Here we have $(\tilde k,s,k,r)=(1,3,2,2)$. For arbitrary edge occupations we write
\begin{center}
\vspace{-0.5cm}
\begin{align*}
L_{\tilde k,k}^{s,r}=
\begin{tikzpicture}[scale=0.3,baseline=10]
\draw (0,0) -- (3,0) -- (3,3) -- (0,3) -- (0,0);
\node[left] at (0,1.5) {\footnotesize $s$};
\node[right] at (3,1.5) {\footnotesize $r$};
\node[below] at (1.5,0) {\footnotesize $\tilde k$};
\node[above] at (1.5,3) {\footnotesize $k$};
\end{tikzpicture}
\end{align*}
\end{center}
The $L$ matrix naturally acts in a tensor product of two vector spaces whose basis vectors are labelled by integers. If we multiply the $L$ matrices along one of these tensor factors and take the (weighted) trace then we will obtain a transfer matrix $Q$\footnote{We call this transfer matrix the $Q$ matrix because it is expected to play the role of Baxter's  $Q$-operator \cite{Baxter} for an integrable model defined on the relevant physical space \cite{Mangazeev}.}. The matrix elements of the $Q$ matrix can be written as
\begin{align}\label{eq:Q}
Q_{s,r}=
\sum_{k\geq 0}x^k \prod_{i=0}^{n} L^{s_i,r_i}_{k_{i+1},k_{i}}(z_i)
=
\sum_{k\geq 0}x^k\,
\begin{tikzpicture}[scale=0.3,baseline=(current bounding box.center)]
\draw (0,0) -- (2,0) -- (2,2) -- (0,2) -- (0,0);
\draw (0,2+4)  -- (0,0+4) -- (2,0+4) -- (2,2+4) ;
\draw[dashed] (0,2) -- (0,2+2);
\draw[dashed] (2,2) -- (2,2+2);
\draw (0,0+6) -- (2,0+6) -- (2,2+6) -- (0,2+6) -- (0,0+6);
\node[left] at (0,1+6) {\footnotesize $s_0$};
\node[right] at (2,1+6) {\footnotesize $r_0$};
\node[left] at (0,1+4) {\footnotesize $s_1$};
\node[right] at (2,1+4) {\footnotesize $r_1$};
\node[left] at (0,1) {\footnotesize  $s_{n}$};
\node[right] at (2,1) {\footnotesize $r_{n}$};
\node[below] at (1,0) {\footnotesize $k$};
\node[above] at (1,8) {\footnotesize $k$};
\end{tikzpicture}
\end{align}
where 
\begin{align}\label{eq:k's}
k_0=k_{n+1}=k,\qquad 
k_{i}=r_{i-1}+k_{i-1}-s_{i-1},
\qquad 
\text{for } i=1\ldots n
\end{align} 
and $s=(s_0\ldots s_{n})$ and $r=(r_0\ldots r_{n})$. Due to non-vanishing conditions for the matrix elements of $L$ in \eqref{eq:L mx} the integer parameters in $s$ and $r$  need to satisfy
\begin{align}
\sum_{i=0}^n r_i = \sum_{i=0}^n s_i 
\end{align}
The parameter $x$ in \eqref{eq:Q} represents the weight in the trace. This parameter counts the number of paths which wind through the column of $L$ matrices. The matrices $Q$ can be used to construct rectangular partition functions of lattice paths with cylindric boundary conditions (this term is due to the presence of the trace in $Q$). This is achieved by computing products of $Q$ matrices 
\begin{align*}
Z_{s^{(0)},s^{(m)}}=\sum_{s^{(1)}\ldots s^{(m-1)}}Q_{s^{(0)},s^{(1)}}\cdots Q_{s^{(m-1)},s^{(m)}},
\end{align*}
where the summations over $s^{(i)}$ are finite because of the restriction on the labels in \eqref{eq:L mx} and the integer sequences $s^{(0)}=(s_0^{(0)}\ldots s_n^{(0)})$ and $s^{(m)}=(s_0^{(m)}\ldots s_n^{(m)})$ represent the boundary conditions. There exist special choices of boundary conditions for which the quantity $Z_{s^{(0)},s^{(m)}}$ becomes a symmetric polynomial in $z_0\ldots z_n$\footnote{The Macdonald and related symmetric polynomials are usually defined over the field $\mathbb{Q}(q,t)$. Our parameters $x$ and $q$ play the roles of $q$ and $t$ respectively.}. Several known families of symmetric (and non-symmetric) polynomials have been constructed using such cylindric lattice path partition functions \cite{Korff,CdGW,GW,BW,HW}. One limitation of this approach is that currently it cannot explain properties of the symmetric functions such as branching rules and Cauchy identities (see \cite{Macdonald} for background on symmetric functions). In technical terms the reason for this limitation is the presence of the trace in $Q$ in \eqref{eq:Q}. This provides one of the motivations for obtaining formulas for matrix elements of $Q$. 

If we substitute the weights \eqref{eq:L mx} into the $L$ matrices in \eqref{eq:Q} then we will obtain the formula
\begin{align}\label{eq:Q bin}
Q_{s,r}=
z_0^{r_0}\cdots z_n^{r_n} \sum_{k\geq 0}x^k \prod_{i=0}^{n}  
\qbinom{k+r_i+\sum_{j=0}^{i-1}(r_j-s_j),k+\sum_{j=0}^{i-1}(r_j-s_j)}
\end{align}
We identify the indices $s,r$ with the earlier used indices $\alpha$ and $\beta$
\begin{align}\label{eq:alpha beta r s}
\alpha_0=r_0,\qquad 
\beta_i=\sum_{j=0}^{i-1}(r_j-s_j),
\qquad
\alpha_{i}=r_i+\beta_i,
\qquad i=1\ldots n
\end{align}
Assuming that 
\begin{align}\label{eq:conditions x}
\alpha_0\geq \max(\beta), \qquad \beta_i\geq 0,\qquad i=1\ldots n
\end{align}
we can express the matrix elements of $Q$ via $\Phi_{\alpha,\beta}$
\begin{align}\label{eq:Q Phi}
Q_{s,r}=
\frac{\prod_{i=0}^n z_i^{r_i}}{(x;q)_{N+1}} 
\Phi_{\alpha,\beta}(x;q)
\end{align}
where $N=r_0+\cdots +r_n$. If the conditions \eqref{eq:conditions x} do not hold then there exists a reparametrisation of \eqref{eq:alpha beta r s} after which these conditions will be satisfied. This will also produce an overall power of $x$ compared to \eqref{eq:Q Phi} (see \cite{GW} for details). By writing $\Phi_{\alpha,\beta}(x;q)$ using any of the three formulas  \eqref{eq:beta Phi des maj}, \eqref{eq:mstc beta inv} and \eqref{eq:qbinomial formula} we obtain three finite sum expressions for the $Q$ matrix elements. A particularly interesting case is the $q$-binomial formula \eqref{eq:qbinomial formula}, it gives
\begin{align}\label{eq:Q qbinomial formula}
Q_{s,r}=
\frac{\prod_{i=0}^n z_i^{r_i}}{(x;q)_{N+1}} 
\sum_{\kappa}
x^{\kappa_1}
\prod_{i=1}^n
q^{(\kappa_i-\kappa_{i+1})(\beta_i+\kappa_i)}
\qbinom{\alpha_i+\kappa_{i+1},\beta_i+\kappa_i}
\qbinom{\lambda_{i}-\kappa_{i+1},
\lambda_{i}-\kappa_{i}}
\end{align}
It suggests that the $q$-binomial coefficients in \eqref{eq:Q qbinomial formula} can be reexpressed in terms of $L$ matrices of $U_{q}(\widehat{\frak{sl}}_{3})$ type (see \cite{Korff}, \cite{GW}) and thus give rise to a `planar' construction of the $Q$ matrix (i.e. without the trace). On the other hand, using \eqref{eq:qbinomial formula} we showed in Remark \ref{rmk:Phi branching} that $\Phi_{\alpha,\beta}(x;q)$ has a branching formula. We hope that these results would be useful for studying properties of symmetric functions (e.g. branching rules and Cauchy identities) obtained from products of $Q$ matrices such as \eqref{eq:Q}.


\subsection{Outline}
In Section \ref{sec:MacMahon proof} we prove the generalisation of the MacMahon identity stated in Theorem \ref{thm:beta Mac}. In Section \ref{sec:binom mstc} we prove the finite binomial sum formula stated in Theorem \ref{thm:qbinomial} and after that we use this result to prove the generalisation of Carnevale's formula from Theorem \ref{thm:mstc inv}.




\section{The generalised MacMahon identity}\label{sec:MacMahon proof}
We first give a proof of the standard MacMahon identity. For this we use a known proof based on barred multiset permutations \cite{GesselStanley,Moynihan}. We then generalise this proof to the case of Theorem \ref{thm:beta Mac}. The barred multiset permutations are multiset permutations with bars inserted among the letters according to some rules. Such configurations are assigned weights which are designed in a way to reproduce the statistics appearing in the formulas \eqref{eq:Mac identity} and \eqref{eq:beta Mac identity}. In both proofs we count all weighted barred multiset permutations in two different ways which leads to an equation of the form
$$
\sum_{k\geq 0} \cdots = \sum_{w\in S_\delta}\cdots
$$
On the left hand side we fix the number of bars to be $k$ and then show that allocating letters among these bars produces configurations whose weighted sum gives $q$-binomial coefficients. The right hand side of the identity is computed by fixing a word $w$ and placing the bars that are necessarily required by the rules and then computing the weight of that configuration. The latter computation reproduces word statistics by construction.


\subsection{The proof of the MacMahon identity}
In this section we set
\begin{align}
\label{eq:alpha N}
\alpha=(\alpha_1\ldots \alpha_n),\qquad
\alpha_i\geq 0,\qquad  
M= \alpha_1+\cdots + \alpha_n
\end{align}
and $S_\alpha$ is the set of multiset permutations of $\{1^{\alpha_1}\ldots n^{\alpha_n}\}$. We need to prove the identity
\begin{align}\label{eq:Mac identity proof}
\sum_{k\geq 0}x^k \prod_{i=1}^{n}\qbinom{k+\alpha_i,k}
=
\frac{\sum_{w\in S_{\alpha}} x^{\normalfont{\des}(w)} q^{\normalfont{\maj}(w)}}{\prod_{i=0}^{M} (1-xq^i) }
\end{align}
\begin{defn}[Barred words]
\label{def:barred words}
Let $w=(w_1\ldots w_M)\in S_{\alpha}$. A {\it barred} word of $w$ is obtained by placing bars in the $M+1$ positions before, between and after its letters subject to the condition that at least one bar occurs after $w_i$ whenever $i\in\Des(w)$. Equivalently, the bars split $w$ into weakly increasing sequences. Any number of additional bars may be placed in any of the $M+1$ positions. The unique barred configuration minimal under deletion of bars has one bar at each descent of $w$ and no other bars. We call these the \emph{descent bars}. 
\end{defn}
\begin{defn}[Weights of barred words]
\label{def:weights of barred words}
A bar with $i$ letters to its left has weight $xq^i$. The weight of a barred word is the product of the weights of all its bars.
\end{defn}

\begin{ex}
Let $\alpha=(2,1,1,2)$ and $w=(341124)$. The following is a barred word of $w$
\begin{align*}
\setlength{\arraycolsep}{2.5pt}
\begin{array}{r@{\quad}*{13}{c}}
& \nogap
& 3 & \nogap
& 4 & \onebar
& 1 & \nogap
& 1 & \nogap
& 2 & \onebar
& 4 & \twobars
\end{array}
\end{align*}
We indicate the weights of the bars on the same barred word as follows
\begin{align*}
\setlength{\arraycolsep}{2.5pt}
\begin{array}{r@{\quad}*{13}{c}}
& \nogap
& 3 & \nogap
& 4 & \barwt{\onebar}{x q^2}
& 1 & \nogap
& 1 & \nogap
& 2 & \barwt{\onebar}{x q^5}
& 4 & \barwt{\twobars}{(x q^6)^2}
\end{array}
\end{align*}
The weight of this barred word is $x^4q^{19}$.
\end{ex}

\begin{rmk}\label{rmk:Mac RHS from bars}
Definitions \ref{def:barred words} and \ref{def:weights of barred words} immediately give the right hand side of \eqref{eq:Mac identity proof}. Indeed, for a fixed $w\in S_\alpha$, the descent bars contribute $x^{\des(w)}q^{\maj(w)}$. At each position $i=0\ldots M$, additional $j_i$ bars may be inserted independently producing the weight $(x q^i)^{j_i}$. Therefore the weighted sum over all
barred multiset permutations of $w$ is
\begin{align*}
x^{\des(w)}q^{\maj(w)}
\prod_{i=0}^{M}\sum_{j_i\geq 0}(xq^i)^{j_i}
=
\frac{x^{\des(w)}q^{\maj(w)}}
{\prod_{i=0}^{M}(1-xq^i)}
\end{align*}
Summing over $w\in S_\alpha$ gives the required right hand side of \eqref{eq:Mac identity proof}.
\end{rmk}

It remains to recover the left hand side of \eqref{eq:Mac identity proof} by fixing the total number of bars. Our computation below is an adaptation of the proof of  \cite{Moynihan}.

\begin{prop}\label{prop:fixed bars Mac}
Fix $k\geq 0$, the sum of the weights of all barred multiset permutations of all words $w\in S_\alpha$ with $k$ bars is
\begin{align}\label{eq:k-bars weight}
x^k\prod_{i=1}^{n}\qbinom{k+\alpha_i,k}
\end{align}
\end{prop}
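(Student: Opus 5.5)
Fix $k$ and view a barred word with $k$ bars as a sequence of $k+1$ compartments: the segment before the first bar, the segments between consecutive bars, and the segment after the last bar. The rule that every descent of $w$ is followed by at least one bar says exactly that every compartment is weakly increasing. Since we sum over all $w\in S_\alpha$, a barred word is therefore the same as a choice, for each letter value $i=1,\ldots,n$, of how many copies of $i$ go into each of the $k+1$ compartments, i.e.\ a weak composition $(m^{(i)}_0,\ldots,m^{(i)}_k)$ of $\alpha_i$. Once all these numbers are chosen, the content of each compartment is determined, and so is the word, because inside a compartment the letters must be sorted increasingly. This gives a bijection between barred words with $k$ bars and $n$-tuples of weak compositions of $\alpha_1,\ldots,\alpha_n$ into $k+1$ parts.

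Next compute the weight. A bar contributes $x$ times $q$ to the number of letters on its left, so the weight of a configuration is $x^k q^{S}$, where $S$ is the sum, over the $k$ bars, of the number of letters to the left of the bar. I will exchange the order of summation: $S$ equals the sum over all letters of the number of bars to the right of that letter. A letter in compartment $j$ (counting from $0$) has $k-j$ bars to its right, so
\[
S=\sum_{i=1}^n\sum_{j=0}^k (k-j)\,m^{(i)}_j .
\]
This expression separates as a sum over the letter values $i$. Hence the total weight factorises as $x^k\prod_{i=1}^n\sum q^{\sum_j (k-j)m^{(i)}_j}$, where the inner sum runs over weak compositions of $\alpha_i$ into $k+1$ parts.

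Finally, each factor is the generating function of a multiset of $\alpha_i$ values chosen from $\{0,1,\ldots,k\}$ (the value $k-j$ taken with multiplicity $m^{(i)}_j$), weighted by $q^{\text{sum}}$. Equivalently it counts partitions with at most $\alpha_i$ parts, each part at most $k$. This is the standard identity $\qbinom{k+\alpha_i,k}$, which I can take as known or prove by the usual induction on $k+\alpha_i$ via the $q$-Pascal recurrence. Multiplying the factors gives \eqref{eq:k-bars weight}.

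The only real point needing care is the first step: checking that the correspondence with compositions is bijective when several bars occupy the same position (these give empty compartments) and when bars sit at either end. The key fact is that the word is recovered uniquely from the compartment contents because each compartment is sorted. Also, a word $w$ can arise with several bar placements, but each barred word is counted exactly once, which is precisely what the sum requires.
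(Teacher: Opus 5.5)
Your proof is correct and is essentially the paper's argument. The paper also fixes the $k$ bars and uses the weakly increasing condition to force a unique position for letters inside each compartment. It then notes that a letter in the $r$-th compartment from the right contributes $q^r$, so each letter value independently gives $\sum_{0\leq r_1\leq\cdots\leq r_{\alpha_i}\leq k}q^{r_1+\cdots+r_{\alpha_i}}=\qbinom{k+\alpha_i,k}$. The only difference is presentational: the paper inserts the letters $n,n-1,\ldots,1$ one value at a time, whereas you set up a single bijection with $n$-tuples of weak compositions and exchange the order of summation in the exponent.
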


\begin{proof}
We can form all barred words of $w\in S_\alpha$ with $k$ bars recursively. We start from an empty word and place $k$ bars, then we consider all locations where we can insert letters $n$, then all locations where we can insert letters $n-1$ etc. By placing $k$ bars we form $k+1$ compartments where letters can be inserted, for example 
\begin{align*}
\setlength{\arraycolsep}{2.5pt}
\begin{array}{r@{\quad}*{13}{c}}
&  & \barwt{\onebar}{x}
&  & \barwt{\onebar}{x}
&  & \barwt{\onebar}{x}
&  & \barwt{\onebar}{x} 
\end{array}
\end{align*}
The associated weight of the bars is $x^k$. We number these compartments from right to left starting from $0$.
Then we place $\alpha_n$ copies of the letter $n$ and count the associated weight. For example
\begin{align*}
\setlength{\arraycolsep}{2.5pt}
\begin{array}{r@{\quad}*{13}{c}}
&  & \barwt{\onebar}{x}
&   n n n & \barwt{\onebar}{x   q^3}
&   & \barwt{\onebar}{x  q^3}
&   n & \barwt{\onebar}{x   q^4}
&   n n &
\end{array}
\end{align*}
In this way we are placing the first letter $n$ in the $r_1$-th compartment, the second letter $n$ in the $r_2$-th compartment and so on. Such a distribution is encoded by the weakly increasing sequence $r_1\leq \cdots \leq r_{\alpha_n}$. The $i$-th letter $n$ placed in the $r_i$-th compartment gives the weight $q$ to each of the $r_i$ bars to its right contributing the total weight of $q^{r_i}$. 
Hence the weighted sum over all possible configurations with $\alpha_n$ letters $n$ is given by
\begin{align*}
\sum_{0\leq r_1\leq\cdots\leq r_{\alpha_n}\leq k}
q^{r_1+\cdots+r_{\alpha_n}}
 =\qbinom{k+\alpha_n,k}
\end{align*}
In the next step we perform the same procedure with $\alpha_{n-1}$ copies of the letter $n-1$ and count the associated weight. It is important to note that, since all letters between two consecutive bars must form a weakly increasing sequence, the letters $n-1$ within a compartment must be placed in a fixed location: before all letters $n$. For example
\begin{align*}
\setlength{\arraycolsep}{2.5pt}
\begin{array}{r@{\quad}*{13}{c}}
&  (n-1) (n-1) & \barwt{\onebar}{x q^2 }
&  (n-1) n n n & \barwt{\onebar}{x   q^6}
&  & \barwt{\onebar}{x  q^6}
&  n & \barwt{\onebar}{x   q^7}
&  (n-1) n n &
\end{array}
\end{align*}
This shows that at the $(n-i+1)$-th step  if we need to add $\alpha_i$ letters $i$ to a barred word then once we determine in which compartments they are placed then their precise locations within compartments are fixed. Hence the contribution from adding $\alpha_i$ letters $i$ in all possible locations is given by 
\begin{align*}
\sum_{0\leq r_1\leq\cdots\leq r_{\alpha_i}\leq k}
q^{r_1+\cdots+r_{\alpha_i}}
 =\qbinom{k+\alpha_i,k}
\end{align*}
Collecting the weights from all ways of distributing letters $(1^{\alpha_1}\ldots n^{\alpha_n})$ over all compartments gives us the weight
\begin{align*}
x^k \prod_{i=1}^{n}\qbinom{k+\alpha_i,k}
\end{align*}
\end{proof}
Summing \eqref{eq:k-bars weight} over $k$ gives the weighted sum over all barred configurations for all words in $S_\alpha$ and matches with the left hand side of \eqref{eq:Mac identity proof}. Combining this with Remark \ref{rmk:Mac RHS from bars} proves the identity \eqref{eq:Mac identity proof}.


\subsection{The proof of the generalised MacMahon identity (Theorem \ref{thm:beta Mac})}
In this section we set
\begin{align}
\label{eq:alpha beta delta}
\alpha=(\alpha_0,\alpha_1\ldots \alpha_n),\qquad 
\beta=(\beta_0,\beta_1\ldots \beta_n), \qquad 
\delta=\alpha-\beta,
\qquad N=\delta_0+\cdots +\delta_n
\end{align}
and $S_\delta$ is the set of multiset permutations of $\{0^{\delta_0}\ldots n^{\delta_n}\}$.  We impose the following conditions on the integer sequences $\alpha$ and $\beta$ 
\begin{align}
\label{eq:beta conditions}
&\beta_0=0,\qquad
\beta_1\geq \cdots\geq \beta_n\geq 0\\
\label{eq:alpha beta conditions}
&\alpha_0 \geq \beta_1,\qquad
\alpha_i\geq\beta_i\quad (i=1,\ldots,n)
\end{align}
We need to prove the identity
\begin{align}\label{eq:beta Mac identity proof}
\sum_{k\geq 0}x^k \qbinom{k+\alpha_0,k}\prod_{i=1}^{n}\qbinom{k+\alpha_i,k+\beta_i}
=
\frac{\sum_{w\in S_{\delta}} x^{\normalfont{\des}_\beta(w)} q^{\normalfont{\maj}_\beta(w)+\inv_\beta(w)}}{\prod_{i=0}^{N} (1-x q^i) }
\end{align}
\begin{defn}[$\beta$-barred words]
\label{def:beta barred word}
Let $w=(w_1\ldots w_N)\in S_{\delta}$. A {\it $\beta$-barred} word of $w$ is obtained by placing bars in the $N+1$ positions before, between and after its letters subject to the condition that at least one bar occurs after $w_i$ whenever $i\in\Des_\beta(w)$. Any number of additional bars may be placed in any of the $N+1$ positions. The necessary bars located after $w_i$ for $i\in\Des_\beta(w)$ are called the {\it $\beta$-descent bars}. 
\end{defn}
\begin{defn}[Weights of $\beta$-barred words]
\label{def:weights of beta barred word}
A bar with $i$ letters to its left has weight $xq^i$. In addition to this every letter $0\in w$ appearing in position $j$ has the weight $q^{m_j}$ where $m_j$ counts the number of $\widehat w_i>\widehat w_j$ for $i<j $. From the definition of $\widehat w$ it follows that  only the first $\beta_1$ letters $0\in w$ will contribute a non-trivial weight. The weight of a $\beta$-barred word is the product of all its bar and zero weights. 
\end{defn}

In examples we put $w$ above $\widehat w$ and draw each bar vertically through both words. We indicate the bar weights under the corresponding bars and zero weights above the corresponding zeros of $w$.
\begin{ex}
Let $(\beta_1,\beta_2,\beta_3)=(3,1,0)$ and $w=(123100200)$. Then $\widehat w=(310301123)$ and $\Des_\beta(w)=\{3,7\}$. The $\beta$-barred word of $w$ containing only its $\beta$-descent bars is
\begin{equation*}
\barreddoubleword
  {8}
  {1,2,3,1,0,0,2,0,0}
  {3,1,0,3,0,1,1,2,3}
  {3/1,7/1}
  {}
\end{equation*}
The bar and zero weights of this $\beta$-barred word are indicated as follows
\begin{equation*}
\weighteddoubleword
  {8}
  {1,2,3,1,0,0,2,0,0}
  {3,1,0,3,0,1,1,2,3}
  {3/1,7/1}
  {3,7}
  {5/q^3,6/q^2,8/q^2,9/1}
  {}
\end{equation*}
The first three zeros have weights $q^3$, $q^2$ and $q^2$ and the last one has a trivial weight. This happens because the $j$-th $0$ in $w$ will not create an inversion for the corresponding letter in $\widehat w$ when $j> \beta_1$ (as mentioned in Definition \ref{def:weights of beta barred word}). The combined zero weight equals $q^7$ which coincides with $q^{\inv_\beta(w)}$. This $\beta$-barred word has weight $x^2q^{17}$.
\end{ex}

\begin{rmk}\label{rmk:beta Mac RHS from bars}
Fix a word  $w\in S_\delta$. The product of the zero weights of a barred word of $w$ equals $q^{\inv_\beta(w)}$. The weighted sum over all $\beta$-barred words of $w$ is
\begin{align}
\label{eq:beta-weight of w}
x^{\des_\beta(w)}q^{\maj_\beta(w)+\inv_\beta(w)}
\prod_{i=0}^{N}\sum_{j_i\geq 0}(xq^i)^{j_i}
=
\frac{x^{\des_\beta(w)}q^{\maj_\beta(w)+\inv_\beta(w)}}
{\prod_{i=0}^{N}(1-xq^i)}
\end{align}
\end{rmk}
\begin{proof}
We recall that $\Inv_\beta(w)=\Inv(w)\cap \Inv(\widehat w)$. Consider a pair of indices $(i,j)$ with $i<j$. Suppose $w_i>w_j$ and  $w_j>0$, then the pair $(i,j)\in \Inv(w)$ by definition. However, $(i,j)\notin \Inv(\widehat w)$ because $\widehat w_i=\beta_{w_i}\leq \beta_{w_j}= \widehat w_j$ (recall the imposed ordering $\beta_1\geq\cdots\geq\beta_n$). This means that in order to find elements of $\Inv_\beta(w)$ we must look for the pairs $(i,j)$ with $w_i>w_j$ and $w_j=0$. Such pairs $(i,j)$ can be counted by running through all instances $w_j=0$ and  counting the numbers of $\widehat w_i>\widehat w_j$ for $i<j$. This counting gives us $\inv_\beta(w)$ producing the weight $q^{\inv_\beta(w)}$ which by Definition \ref{def:weights of beta barred word} is equal to the product of the zero weights of $w$. 

By Definition \ref{def:weights of beta barred word}, for a fixed $w\in S_\delta$, the $\beta$-descent bars contribute the weight $x^{\des_\beta(w)}q^{\maj_\beta(w)}$ while the zero weights contribute $q^{\inv_\beta(w)}$. Since the additional $j_i$ bars may be inserted independently at every position $i=0\ldots N$, computing the weighted sum over all barred words of $w$ gives \eqref{eq:beta-weight of w}. 
\end{proof}

By summing \eqref{eq:beta-weight of w} over $w$ we obtain the right hand side of \eqref{eq:beta Mac identity proof}. In the following proposition we compute the weighted sum of $\beta$-barred words with a fixed number of bars. This will allow us to obtain the left hand side of \eqref{eq:beta Mac identity proof}.

\begin{prop}\label{prop:fixed bars beta Mac}
For every $k\geq 0$, the sum of the weights of all $\beta$-barred words of all $w\in S_\delta$ with exactly $k$ bars is
\begin{align}\label{eq:k-bars beta weight}
x^k\qbinom{k+\alpha_0,k}
\prod_{i=1}^{n}\qbinom{k+\alpha_i,k+\beta_i}
\end{align}
\end{prop}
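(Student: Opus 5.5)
The plan is to mimic the proof of Proposition~\ref{prop:fixed bars Mac}. Fix $k$ bars, build every $\beta$-barred word with exactly $k$ bars by inserting letters one group at a time, and check that each group contributes one factor of \eqref{eq:k-bars beta weight}. The key observation is that
\[
\qbinom{k+\alpha_i,k+\beta_i}=\qbinom{k+\beta_i+\delta_i,\delta_i}=\sum_{0\le r_1\le\cdots\le r_{\delta_i}\le k+\beta_i}q^{r_1+\cdots+r_{\delta_i}} .
\]
So the $\delta_i$ copies of a letter $i>0$ should behave as if there were $k+\beta_i$ separators instead of $k$: the $k$ bars together with the first $\beta_i$ zeros of $w$.

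First I would unpack the rule for $\beta$-descents using the ordering $\beta_1\ge\cdots\ge\beta_n$.
\begin{itemize}
\item If $w_i>w_{i+1}>0$, then $\widehat w_i=\beta_{w_i}\le\beta_{w_{i+1}}=\widehat w_{i+1}$. Hence every ordinary descent between nonzero letters is a $\beta$-descent, and nonzero letters inside a compartment are weakly increasing.
\item Zeros never form descents with anything that follows them.
\item A pair $w_i=a>0$, $w_{i+1}=0$ is a $\beta$-descent exactly when $\beta_a\le c$, where $c$ is the number of zeros before position $i+1$. In other words, $a$ may sit directly before the zero of index $c$ (zeros indexed from $0$) without a bar iff $c<\beta_a$.
\end{itemize}

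Second, I would reorganise the weights. By Definition~\ref{def:weights of beta barred word}, the zero of index $c$ gets a factor $q$ for each earlier nonzero letter $a$ with $\beta_a>c$. I would redistribute these factors onto the nonzero letters: a letter $a$ then receives $q$ for each bar to its right and for each zero of index $<\beta_a$ to its right. That is, $a$ receives $q^{r}$, where $r$ is the number of its separators to its right.

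Third, the insertion order: place the $k$ bars (weight $x^k$), then the $\alpha_0$ zeros, then the letters $n,n-1,\dots,1$.
\begin{itemize}
\item Zeros are unconstrained among themselves. Each zero contributes $q$ per bar to its right, and within a compartment they sit in a forced block. This gives $\sum_{0\le r_1\le\cdots\le r_{\alpha_0}\le k}q^{\sum r_j}=\qbinom{k+\alpha_0,k}$.
\item For a letter $a$, I choose, for each copy, which of the $k+\beta_a+1$ slots it goes in. The slots are cut out by the bars and the zeros of index $<\beta_a$, and the redistributed weight of a copy in slot $r$ is $q^{r}$. Summing over copies gives the factor above.
\end{itemize}

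The main obstacle is to show that, once the slot of each copy of $a$ is chosen, its exact position is forced and every legal configuration arises exactly once. Inside a slot, the rules above say:
\begin{itemize}
\item $a$ must come after every zero of index $\ge\beta_a$ in that slot, since otherwise a bar would be required;
\item $a$ must come before any larger letter $b$ already present.
\end{itemize}
The delicate point is that a larger $b$ has $\beta_b\le\beta_a$. So zeros with index in $[\beta_b,\beta_a)$ are separators for $a$ but ordinary zeros for $b$, and $b$ must lie after them. I would argue by induction on the insertion step that the canonical position of $a$ is the first position in its slot after all of that slot's zeros. This position is automatically before any $b$ in the slot, because such $b$'s already sit after all these zeros. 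Conversely, deleting all copies of the smallest nonzero letter from a legal configuration yields a legal configuration, with no new $\beta$-descents created, so the map is bijective. I would verify this removal step carefully, in particular that removing $a$ never places a letter $b$ directly before a zero of index $\ge\beta_b$ without a bar. This should follow because the zero following $a$ has index $<\beta_a$, and the letter before $a$ is a zero or a letter $\le a$.
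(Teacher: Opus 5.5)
Your proposal is correct and is essentially the paper's proof: the first $\beta_a$ zeros act as extra separators (the paper's $0$-bars), giving $k+\beta_a+1$ compartments in which each copy of $a$ has a forced position and contributes $q^r$. Your redistribution of the zero weights and your deletion argument spell out the uniqueness that the paper only asserts. One small slip: your remark that nonzero letters inside a compartment are weakly increasing fails when compartments are cut only by bars (e.g.\ $w=2\,0\,1$ with $\beta_1=\beta_2=1$ has $\Des_\beta(w)=\emptyset$), but your later slot-by-slot argument does not rely on it.
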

\begin{proof}
As in the proof of Proposition \ref{prop:fixed bars Mac}, we construct the barred words by fixing $k$ bars and adding letters of $w$ recursively. Begin with $k$ bars and no letters, for example
\begin{align*}
\setlength{\arraycolsep}{2.5pt}
\begin{array}{r@{\quad}*{13}{c}}
&   & \barwt{\onebar}{x}
&   & \barwt{\onebar}{x}
&   & \barwt{\onebar}{x}
&   & \barwt{\onebar}{x} 
\end{array}
\end{align*}
The bars give the weight $x^k$. The bars create $k+1$ compartments which we number from right to left starting from $0$. We insert the $\delta_0=\alpha_0$ letters $0$, for example
\begin{equation}\label{eq:proof barred word example 0}
\manualweighteddoubleword
  {5}
  {0,0,0, ,0}
  {0,1,2, ,3}
  {0/1,1/1,3/1,4/1}
  {0/q^0,1/q^1,3/q^3,4/q^3}
  {1/q^0,2/q^0,3/q^0,5/q^0}
  {}
\end{equation}
By the same logic as in Proposition \ref{prop:fixed bars Mac} the weighted sum over all possible configurations with $\alpha_0$ letters $0$ is given by
\begin{align*}
\sum_{0\leq r_1\leq\cdots\leq r_{\alpha_0}\leq k}
q^{r_1+\cdots+r_{\alpha_0}}
 =\qbinom{k+\alpha_0,k}
\end{align*}
In the next step we need to insert $\delta_n$ letters $n$ in all possible ways which produce valid barred words. In order to compute the weighted sum over these configurations we will use the formula 
\begin{align}\label{eq:binom alpha-beta}
\sum_{0\leq r_1\leq\cdots\leq r_{\delta_n}\leq k+\beta_n}
q^{r_1+\cdots+r_{\delta_n}}
 =\qbinom{k+\alpha_n,k+\beta_n}
\end{align}
This expression tells us that we are distributing $\delta_n$ letters $n$ among $k+\beta_n+1$ compartments and each $i$-th letter contributes the weight $q^{r_i}$. This increase in the number of compartments can be achieved by adding extra $\beta_n$ bars. Indeed, the zero weights depend on how $\delta_n$ letters $n$ are inserted with respect to the first $\beta_n$ $0$'s. Hence the first $\beta_n$ $0$'s effectively form another set of bars which we call $0$-bars. The $0$-bars split the existing compartments further. We note that the $0$-bars carry no $x$ weight. For example, if $\beta_n=2$ we may draw the $0$-bars through the first $2$ $0$'s
\begin{equation}\label{eq:proof barred word example 0 bars}
\manualweighteddoubleword
  {5}
  {0,0,0, ,0}
  {0,1,2, ,3}
  {0/1,1/1,3/1,4/1}
  {0/q^0,1/q^1,3/q^3,4/q^3}
  {1/q^0,2/q^0,3/q^0,5/q^0}
  {1/1,2/1}
\end{equation}
Then we may insert $\delta_n$ letters $n$ in $k+\beta_n+1$ compartments, for example  
\begin{equation}\label{eq:proof barred word example 0 bars and n's}
\manualweighteddoubleword
  {5}
  {n,n,0,n,0,0,n,n, ,0,n}
  {2,2,0,2,1,2,2,2, ,3,2}
  {1/1,4/1,8/1,9/1}
  {1/q,4/q^4,8/q^8,9/q^8}
  {3/q^2,5/q^3}
  {3/1,5/1}
\end{equation}
In these $k+\beta_n+1$ compartments the uncrossed zeros precede the letters $n$. Thus the letters of $w$ are weakly increasing within every compartment. A copy of $n$ placed in the $r$-th compartment (counted from the right) contributes the factor $q^r$, i.e. one factor $q$ to every ordinary bar to its right and one factor $q$ to the zero weight at every $0$-bar to its right. Such configurations are counted precisely by \eqref{eq:binom alpha-beta}.

At each subsequent step $i=n-1,n-2,\ldots,1$, suppose that the letters $n,n-1,\ldots,i+1$ have already been inserted. Since $\beta_i\geq\beta_{i+1}$, we add $\beta_i-\beta_{i+1}$ new $0$-bars so that the first $\beta_i$ zeros now form the $0$-bars. Thus adding further letters also produces further $0$-bars and subdivides the existing compartments. Together with the $k$ ordinary bars the $0$-bars form $k+\beta_i+1$ compartments numbered from right to left. Within each compartment the uncrossed zeros come first, followed by the positive letters in weakly increasing order. Hence the $\delta_i$ copies of $i$ have a unique possible location in each compartment: after the uncrossed zeros and before the already inserted letters. A copy of $i$ placed in the $r$-th compartment (counted from the right) contributes the factor $q^r$, i.e. one factor $q$ to every ordinary bar to its right and one factor $q$ to the zero weight at every $0$-bar to its right. Therefore the weighted sum at this step is
\begin{align*}
\sum_{0\leq r_1\leq\cdots\leq r_{\delta_i}\leq k+\beta_i}
q^{r_1+\cdots+r_{\delta_i}}
=\qbinom{k+\alpha_i,k+\beta_i}
\end{align*}
Repeating this step down to $i=1$ produces every valid $\beta$-barred word with $k$ bars uniquely. Multiplying the contributions from the bars, the zeros and all positive letters gives
\begin{align*}
x^k\qbinom{k+\alpha_0,k}
\prod_{i=1}^{n}\qbinom{k+\alpha_i,k+\beta_i}
\end{align*}
as required.
\end{proof}
Summing \eqref{eq:k-bars beta weight} over $k$ gives the weighted sum over all $\beta$-barred configurations of all words of $S_\delta$ and matches with the left hand side of \eqref{eq:beta Mac identity proof}. Combining this with Remark \ref{rmk:beta Mac RHS from bars} proves the identity \eqref{eq:beta Mac identity proof}.




\section{The binomial formula and the $(\mstc_\beta,\inv)$-formula}\label{sec:binom mstc}
In this section we prove Theorem \ref{thm:qbinomial} and after that we use it to prove Theorem \ref{thm:mstc inv}.  As before we let $\alpha=(\alpha_0,\alpha_1\ldots \alpha_n)$ and $\beta=(\beta_1\ldots \beta_n)$ be two collections of integers satisfying
\begin{align}
\label{eq:condition alpha beta X}
\alpha_0\geq \max(\beta), 
\qquad 
\alpha_i\geq \beta_{i}\geq 0
\end{align}
for $i=1\ldots n$. We set $\gamma_i=\alpha_i-\beta_i$ for $i=1\ldots n$ and $\gamma_{n+1}=\alpha_0$ and introduce the integers
\begin{align}\label{eq:lambda X}
\lambda_i=\gamma_{i+1}+\cdots+\gamma_{n+1}-\beta_i,\qquad i=1\ldots n
\end{align}

\subsection{Proof of Theorem \ref{thm:qbinomial}}\label{sec:proof binomial}
Recall the definition of the normalised series $\Phi_{\alpha,\beta}(x;q)$ and for any $m=1\ldots  n+1$ and fixed $\alpha$ and $\beta$  define a related family of functions
\begin{align}\label{eq:Psi m}
\Psi^{(m)}(x)=(x;q)_{\sum_{i=m}^{n+1}\gamma_i+1}\sum_{k\geq 0} x^k\qbinom{k+\alpha_0,k}
\prod_{i=m}^{n}\qbinom{k+\alpha_i,k+\beta_i}
\end{align}
The two extreme cases are
\begin{align}\label{eq:Psi 0 n}
\Psi^{(n+1)}(x)=(x;q)_{\alpha_0+1}\sum_{k\geq 0} x^k\qbinom{k+\alpha_0,k}=1
\qquad \text{and} \qquad
\Psi^{(1)}(x)=\Phi_{\alpha,\beta}(x;q)
\end{align}
We can expand the functions $\Psi^{(m)}(x)$ in $x$ and write
\begin{align}\label{eq:Psi m coefficients}
\Psi^{(m)}(x)= \sum_{k\geq 0} \psi^{(m)}_{k} x^k
\end{align}
where $\psi^{(m)}_{k}$ vanish for large $k$ as we will prove below independently from our earlier result. The purpose of introducing the family $\Psi^{(m)}$ is to use a relation between the coefficients $\psi^{(m)}_{k}$ and $\psi^{(m+1)}_{k}$ in order to prove Theorem \ref{thm:qbinomial}. This relation is provided by the following Lemma.
\begin{lem}\label{lem:coefficient transformation}
Let $a,b,c$ be non-negative integers such that $a,c\geq b$. Consider two expressions
\begin{align}
F(x)
&=(x;q)_{c+1}\sum_{k\geq 0}x^ku_k
=\sum_{\ell\geq0}f_\ell x^\ell\label{eq: F series}\\
\widetilde F(x)
&=(x;q)_{c+a-b+1}\sum_{k\geq 0}x^k
\qbinom{k+a,k+b}u_k
=\sum_{\ell\geq0}\widetilde f_\ell x^\ell\label{eq: F series tilde}
\end{align}
Suppose that $F(x)$ is a polynomial of degree at most $c-b$ in $x$. Then
\begin{align}\label{eq:coefficient transformation}
\widetilde f_\ell
=
\sum_{j=0}^\ell
q^{(\ell-j)(b+\ell)}
\qbinom{a+j,b+\ell}
\qbinom{c-b-j,\ell-j}
f_{j}
\end{align}
and  the degree of $\widetilde F(x)$ is at most  $\deg F(x)+a-b$.
\end{lem}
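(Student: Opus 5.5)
The plan is to use linearity in the coefficients $f_j$ and reduce everything to a single-monomial identity between two explicit series. Since $F(x)=(x;q)_{c+1}\sum_k x^k u_k$, we have
\begin{align*}
\sum_{k\geq0}x^ku_k=\frac{F(x)}{(x;q)_{c+1}}=\sum_{j}f_jx^j\sum_{m\geq0}\qbinom{m+c,c}x^m .
\end{align*}
Hence $u_k=\sum_{j\leq k} f_j\qbinom{k-j+c,c}$. Substituting this into $\widetilde F(x)$ and exchanging the finite sum over $j$ with the sum over $k$, with $k=j+m$, gives
\begin{align*}
\widetilde F(x)=\sum_{j=0}^{\deg F} f_j\, x^j\,(x;q)_{c+a-b+1}\sum_{m\geq0}x^m\qbinom{m+c,c}\qbinom{m+j+a,m+j+b}.
\end{align*}
So it suffices to prove, for each fixed $0\leq j\leq c-b$ (this range is exactly where the hypothesis $\deg F\leq c-b$ is used, since it guarantees $c-b-j\geq0$), the identity
\begin{align*}
(x;q)_{c+a-b+1}\sum_{m\geq0}x^m\qbinom{m+c,c}\qbinom{m+j+a,a-b}
=\sum_{\ell\geq j} q^{(\ell-j)(b+\ell)}\qbinom{a+j,b+\ell}\qbinom{c-b-j,\ell-j}x^{\ell-j}.
\end{align*}
Here I have rewritten $\qbinom{m+j+a,m+j+b}=\qbinom{m+j+a,a-b}$.

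To prove this one-term identity, I would first expand $\qbinom{m+j+a,a-b}$ by $q$-Vandermonde, splitting $m+j+a=m+(j+a)$:
\begin{align*}
\qbinom{m+j+a,a-b}=\sum_t q^{t(j+b+t)}\qbinom{m,t}\qbinom{j+a,a-b-t}.
\end{align*}
The exponent should be rechecked against the chosen Vandermonde convention. Next I would use $\qbinom{m+c,c}\qbinom{m,t}=\qbinom{c+t,t}\qbinom{m+c,c+t}$ together with the $q$-binomial series $\sum_m x^m\qbinom{m+c,c+t}=x^t/(x;q)_{c+t+1}$. Multiplying by $(x;q)_{c+a-b+1}$ then leaves the finite product $(xq^{c+t+1};q)_{a-b-t}$. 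This product is expanded by the finite $q$-binomial theorem, $\sum_s(-1)^sq^{\binom s2+s(c+t+1)}\qbinom{a-b-t,s}x^s$.

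The coefficient of $x^{\ell-j}$ on the left is therefore a double sum over $t+s=\ell-j$ of signed products of $q$-binomials. The main obstacle is collapsing this alternating sum to the single positive term $q^{(\ell-j)(b+\ell)}\qbinom{a+j,b+\ell}\qbinom{c-b-j,\ell-j}$. My first attempt would be to combine $\qbinom{j+a,a-b-t}\qbinom{a-b-t,s}$ into $\qbinom{j+a,b+j+t+s}\qbinom{b+j+t+s,\ldots}$-type products, which is the trinomial rearrangement, so that $\qbinom{a+j,b+\ell}$ factors out. The remaining alternating sum over $t$ should then be a $q$-Chu–Vandermonde sum in its negative-argument form, evaluating to $\qbinom{c-b-j,\ell-j}$ up to the stated power of $q$. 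If the bookkeeping of powers fails, a fallback is to check the identity by induction on $a-b$, using the Pascal recurrence for $\qbinom{m+j+a,a-b}$ on both sides.

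Finally, the degree statement follows from the formula \eqref{eq:coefficient transformation}. Since $j\leq\deg F$, the factor $\qbinom{a+j,b+\ell}$ vanishes unless $\ell\leq a+j-b\leq \deg F+a-b$.
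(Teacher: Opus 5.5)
Your proposal is correct. It differs from the paper's proof mainly in how the key summation is evaluated. The paper expands $u_k$ in two ways to get lower-triangular matrices with $X\widetilde f=Yf$. It then inverts $X$, which amounts to multiplication by $(x;q)_{c+a-b+1}$, and evaluates the entries of $X^{-1}Y$ in one stroke with the terminating $q$-Pfaff--Saalsch\"utz summation. Substituting $u_k$ into $\widetilde F$, as you do, computes the same matrix one column $j$ at a time. By first splitting $\qbinom{m+j+a,a-b}$ with $q$-Vandermonde, you reduce everything to the $q$-binomial series, the finite $q$-binomial theorem and one more Vandermonde-type sum. In effect you prove the needed case of Pfaff--Saalsch\"utz from more elementary identities. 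The paper's route is shorter given the citation; yours is self-contained and shows exactly where the hypothesis $j\le c-b$ is used.

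The step you left tentative does go through, with no stray power of $q$, and your Vandermonde exponent $t(j+b+t)$ is the right one. With $t=\ell-j-s$, the exponent $t(j+b+t)+\binom{s}{2}+s(c+t+1)$ simplifies to $(\ell-j)(b+\ell)+\binom{s}{2}+s(c-b-\ell+1)$. So after your trinomial rearrangement it remains to show, for $0\le j\le c-b$,
\[
\sum_{s=0}^{\ell-j}(-1)^sq^{\binom{s}{2}+s(c-b-\ell+1)}\qbinom{b+\ell,s}\qbinom{c+\ell-j-s,\ell-j-s}=\qbinom{c-b-j,\ell-j}.
\]
The left side is the coefficient of $z^{\ell-j}$ in $(zq^{c-b-\ell+1};q)_{b+\ell}/(z;q)_{c+1}$. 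There are two cases.
\begin{itemize}
\item If $b+\ell\le c+1$, this quotient equals $1/(z;q)_{c-b-\ell+1}$. Its $z^{\ell-j}$ coefficient is $\qbinom{c-b-j,\ell-j}$ (using $j\le c-b$ in the edge case $b+\ell=c+1$).
\item If $b+\ell>c+1$, the quotient equals $\prod_{r=1}^{b+\ell-c-1}(1-zq^{-r})$. This is a polynomial of degree $b+\ell-c-1$, which is less than $\ell-j$ since $j\le c-b$. So the coefficient is $0=\qbinom{c-b-j,\ell-j}$.
\end{itemize}
The induction fallback is therefore unnecessary.
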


\begin{proof}
We consider two functions 
\begin{align}\label{eq:normalised Fs}
\frac{1}{(x;q)_{c+1}}F(x) , \qquad 
\frac{1}{(x;q)_{c+a-b+1}}\widetilde F(x) 
\end{align}
in which we expand the inverse Pochhammer symbols using the $q$-binomial series
\begin{align}
\label{eq:q-binom series}
\frac{1}{(x;q)_{j+1}}= \sum_{k\geq 0}x^k \qbinom{j+k,k}
\end{align}
Using \eqref{eq:normalised Fs}, \eqref{eq: F series} and \eqref{eq:q-binom series} we can express $u_k$ as a linear combination of $f_\ell$ and using \eqref{eq:normalised Fs}, \eqref{eq: F series tilde} and \eqref{eq:q-binom series} we can express $u_k$ as a linear combination of $\widetilde f_\ell$
\begin{align}\label{eq:u from f}
u_k
&=
\sum_{\ell=0}^k
\qbinom{c+k-\ell,k-\ell}f_\ell \\
\label{eq:u from f tilde}
\qbinom{k+a,k+b}u_k
&=
\sum_{\ell=0}^k
\qbinom{a+c-b+k-\ell,k-\ell}\widetilde f_\ell
\end{align}
These two equations lead to a relation between $f_\ell$ and $\widetilde f_\ell$
\begin{align}\label{eq:f ftilde eq}
\sum_{\ell=0}^k
\qbinom{a+c-b+k-\ell,k-\ell}\widetilde f_\ell=
\sum_{\ell=0}^k
\qbinom{k+a,k+b}\qbinom{c+k-\ell,k-\ell}f_\ell
\end{align}
This identity can be used to express $\widetilde f_\ell$ in terms of $f_\ell$. For this we can interpret the coefficients in front of $\widetilde f_\ell$ and $f_\ell$ as invertible lower triangular matrices 
\begin{align*}
X_{k\ell}
=
\qbinom{c+a-b+k-\ell,k-\ell},\qquad 
Y_{k\ell}
=
\qbinom{k+a,k+b}
\qbinom{c+k-\ell,k-\ell}
\end{align*}
for $0\leq k,\ell\leq K$ where $K$ is a sufficiently large positive integer. Then 
\begin{align*}
\widetilde f_\ell
=
\sum_{j=0}^\ell
(X^{-1}Y)_{\ell j}
f_{j}
\end{align*}
Inversion of $X$ follows from a standard manipulation with $q$-binomial coefficients while $X^{-1}Y$ can be computed using the terminating $q$-Pfaff--Saalsch\"utz summation \cite[Equation (II.12)]{GasperRahman}. This gives
\begin{align*}
(X^{-1}Y)_{\ell j}
=
q^{(\ell-j)(b+\ell)}
\qbinom{a+j,b+\ell}
\qbinom{c-b-j,\ell-j},
\qquad 0\leq j\leq c-b
\end{align*}
This is precisely the coefficient appearing in \eqref{eq:coefficient transformation}. Note that we must have $\ell-j \leq a-b$ in order for the first binomial coefficient to be non-vanishing. This implies that the degree of $\widetilde F(x)$ cannot increase by more than $a-b$ compared to the degree of $F(x)$. This completes the proof. 
\end{proof}
Let us use this Lemma to complete the proof of Theorem \ref{thm:qbinomial}. Consider the following choice of $F(x)$ and $\widetilde F(x)$
\begin{align*}
F(x)=\Psi^{(m+1)}(x),
\qquad 
\widetilde F(x)=\Psi^{(m)}(x)
\end{align*}
By \eqref{eq:Psi m} this pair has the correct forms of \eqref{eq: F series} and \eqref{eq: F series tilde} with the parameters
\begin{align*}
c =\lambda_{m}+\beta_{m},\qquad
a=\alpha_m,\qquad
b=\beta_m
\end{align*}
We first show that $\Psi^{(m)}(x)$ is a polynomial that satisfies the condition
\begin{align}\label{eq:Psi deg}
\deg\Psi^{(m)} \leq\sum_{i=m}^{n}\gamma_i 
\end{align}
For $m=n+1$ we have $\Psi^{(n+1)}(x)=1$ hence the above condition holds. Suppose it also holds for $\Psi^{(m+1)}(x)$. Since $\alpha_0\geq \beta_m$ we have
\begin{align*}
\deg\Psi^{(m+1)}\leq 
\sum_{i=m+1}^{n}\gamma_i= \lambda_m+\beta_m-\alpha_0\leq \lambda_m
\end{align*}
This is the required degree condition on $\Psi^{(m+1)}$ in the Lemma. Hence we can apply the Lemma which guarantees 
\begin{align*}
\deg\Psi^{(m)}\leq \deg\Psi^{(m+1)}+\alpha_m-\beta_m
\leq\sum_{i=m}^{n}\gamma_i
\end{align*}
This verifies \eqref{eq:Psi deg}

Next we can use \eqref{eq:coefficient transformation} to write down a relation between the coefficients $\psi_k^{(m)}$ and $\psi_{k'}^{(m+1)}$
\begin{align}\label{eq:Psi coefficient recursion}
\psi^{(m)}_{k}
=\sum_{k'=0}^{k}
q^{(k-k')(\beta_m+k)}
\qbinom{\alpha_m+k',\beta_m+k} \qbinom{\lambda_m-k',
\lambda_m-k}
\psi^{(m+1)}_{k'}
\end{align}
Set the labels $k=\kappa_m$ and $k'=\kappa_{m+1}$ and iterate this recurrence relation using the initial condition $\psi^{(n+1)}_0=1$. This gives
\begin{align}\label{eq:psi m}
\psi^{(m)}_{\kappa_m}
=
\sum_{\kappa_{m+1}\ldots\kappa_{n}\geq0}
\prod_{i=m}^n
q^{(\kappa_i-\kappa_{i+1})(\beta_i+\kappa_i)}
\qbinom{\alpha_i+\kappa_{i+1},\beta_i+\kappa_i}
\qbinom{\lambda_i-\kappa_{i+1},
\lambda_i-\kappa_i}
\end{align}
where $\kappa_{n+1}=0$. We compute $\Psi^{(1)}(x)=\Phi_{\alpha,\beta}(x;q)$ by inserting \eqref{eq:psi m} into \eqref{eq:Psi m coefficients} with $m=1$
\begin{align*}
\Psi^{(1)}(x)= \sum_{\kappa_1\geq 0} \psi^{(1)}_{\kappa_1} x^{\kappa_1}
=
\sum_{\kappa_1\ldots\kappa_{n}\geq0}
x^{\kappa_1}
\prod_{i=1}^n
q^{(\kappa_i-\kappa_{i+1})(\beta_i+\kappa_i)}
\qbinom{\alpha_i+\kappa_{i+1},\beta_i+\kappa_i}
\qbinom{\lambda_i-\kappa_{i+1},
\lambda_i-\kappa_i}
\end{align*}
This completes the proof of Theorem \ref{thm:qbinomial}.

\begin{rmk}
\label{rmk:Phi branching}
The finite binomial formula \eqref{eq:qbinomial formula}, proved
above can be used to derive a branching formula for $\Phi_{\alpha,\beta}$. For an integer $r$ define
\begin{align}
\label{eq:branching parameters}
\alpha^{(r)}=(\alpha_0+\alpha_n-\beta_n, \alpha_1+r\ldots\alpha_{n-1}+r),
\qquad
\beta^{(r)}=(\beta_1+r\ldots \beta_{n-1}+r)
\end{align}
Then
\begin{align}
\label{eq:Phi branching}
\Phi_{\alpha,\beta}(x;q)=\sum_{r=0}^{\min(\alpha_0,\alpha_n)-\beta_n}x^r q^{r(\beta_n+r)}
\qbinom{\alpha_n,\beta_n+r}\qbinom{\alpha_0-\beta_n,r} \Phi_{\alpha^{(r)},\beta^{(r)}}(x;q)
\end{align}
\end{rmk}
\begin{proof}
We group the summands in \eqref{eq:qbinomial formula}
according to the value $\kappa_n=r$. Since
$\kappa_{n+1}=0$ and $\lambda_n=\alpha_0-\beta_n$, the factor
with $i=n$ is
\begin{align*}
q^{r(\beta_n+r)}
\qbinom{\alpha_n,\beta_n+r}
\qbinom{\alpha_0-\beta_n,r}
\end{align*}
In the remaining factors we set
\begin{align*}
\kappa_i=r+\nu_i,
\qquad \text{for } 1\leq i<n,
\qquad
\nu_n=0
\end{align*}
and take $\nu_1\ldots \nu_{n-1}$ to be the new summation indices.  The parameters of $(\alpha^{(r)},\beta^{(r)})$ satisfy
\begin{align*}
\alpha_i^{(r)}-\beta_i^{(r)}
=\alpha_i-\beta_i,
\qquad
\lambda_i^{(r)}:=
\alpha_0^{(r)}
+\sum_{j=i+1}^{n-1}
  \bigl(\alpha_j^{(r)}-\beta_j^{(r)}\bigr)
-\beta_i^{(r)}
=\lambda_i-r
\end{align*}
for $i=1\ldots n-1$. After this change of summation indices the product of the $q$-binomial factors with $i<n$ is exactly the product appearing in the binomial formula for $\Phi_{\alpha^{(r)},\beta^{(r)}}(x;q)$. This proves \eqref{eq:Phi branching}.

The identity \eqref{eq:Phi branching} can be written in terms of the basic hypergeometric series. This formula was stated in the introduction \eqref{eq:hgf branching intro}
\begin{align}
\label{eq:hgf branching}
\phi_{\alpha,\beta}(x;q)=\sum_{r\geq 0}(xq^{\alpha_0^{(r)}+1})^r
\frac{(q^{\beta_n-\alpha_0};q)_r (q^{\beta_n-\alpha_n};q)_r}{(q;q)_r(q^{\beta_n+1};q)_r}\prod_{i=1}^{n-1}\frac{(q^{\alpha_i+1};q)_r}
     {(q^{\beta_i+1};q)_r}
\phi_{\alpha^{(r)},\beta^{(r)}}(x;q)
\end{align}
The sum over $r$ in this formula truncates at $r=\min(\alpha_0,\alpha_n)-\beta_n$ due to the $q$-Pochhammer symbols which contain non-positive powers of $q$. This equation can be obtained by applying an identity from \cite{SrivastavaJain} which involves the hypergeometric series with generic parameters and an infinite summation.
\end{proof}


\subsection{Proof of Theorem \ref{thm:mstc inv}}
In this section we give a proof of the formula
\begin{align}
\label{eq:Phi mstc inv proof}
\Phi_{\alpha,\beta}(x;q)
=
 \sum_{w\in S_\gamma} x^{\normalfont{\mstc}_\beta(w)} q^{\normalfont{\inv}(w)}
\end{align}
Our proof is based on results of Section \ref{sec:proof binomial}. More specifically we will use the recurrence relation \eqref{eq:Psi coefficient recursion} in order to prove that \eqref{eq:Phi mstc inv proof} holds for all $\Psi^{(m)}(x)$ hence our proof of \eqref{eq:Phi mstc inv proof} is inductive. 

For $1\leq m\leq n+1$ let $\gamma^{(m)}$ be the composition obtained from $\gamma$ by setting the multiplicities of labels $1\ldots m-1$ to zero. Thus $S_{\gamma^{(m)}}$ consists of multiset permutations of
\begin{align*}
\{m^{\gamma_m}\ldots(n+1)^{\gamma_{n+1}}\}
\end{align*}
Similarly to Definition \ref{def:mstc beta} we define
\begin{align*}
\beta^{\gamma^{(m)}}=(\beta_m^{\gamma_m}\ldots\beta_n^{\gamma_n},0^{\gamma_{n+1}})
\end{align*}
We need to show that
\begin{align}\label{eq:Psi word sum}
\Psi^{(m)}(x)=\sum_{w\in S_{\gamma^{(m)}}}x^{\mstc_\beta(w)}q^{\inv(w)}
\end{align}
For $m=n+1$ we have $\gamma^{(n+1)}=(0\ldots 0,\alpha_0)$ and $S_{\gamma^{(n+1)}}$ contains a single word $(n+1)^{\alpha_0}$. Thus \eqref{eq:Psi word sum} gives
\begin{align}\label{eq:Psi word sum m=n+1}
\Psi^{(n+1)}(x)=1
\end{align}
We will derive \eqref{eq:Psi word sum} assuming that
\begin{align}\label{eq:Psi word sum m+1}
\Psi^{(m+1)}(x)=\sum_{v\in S_{\gamma^{(m+1)}}}x^{\mstc_\beta(v)}\, q^{\inv(v)}
\end{align}
holds. We can rewrite \eqref{eq:Psi word sum m+1} as
\begin{align}\label{eq:Psi m+1 coefs}
\Psi^{(m+1)}(x)= \sum_{k'\geq 0} \psi^{(m+1)}_{k'} x^{k'}=
\sum_{k'\geq 0}x^{k'}
\sum_{\substack{v\in S_{\gamma^{(m+1)}}\\ \mstc_\beta(v)=k' } }q^{\inv(v)}
\end{align}
This gives us the coefficient $\psi^{(m+1)}_{k'}$ in the form
\begin{align}\label{eq:psi m+1 formula}
\psi^{(m+1)}_{k'} =
\sum_{\substack{v\in S_{\gamma^{(m+1)}}\\ \mstc_\beta(v)=k' } } q^{\inv(v)}
\end{align}
Insert this into \eqref{eq:Psi coefficient recursion}
\begin{align}\label{eq:psi coeff X}
\psi^{(m)}_{k}
=
\sum_{k'=0}^{k}
\sum_{\substack{v\in S_{\gamma^{(m+1)}}\\ \mstc_\beta(v)=k' } }
 q^{\inv(v)+(\beta_m+k)(k-k')}
\qbinom{\alpha_m+k',\beta_m+k} \qbinom{\lambda_m-k',
k-k'}
\end{align}
where we rewrote the lower argument of the second binomial coefficient using its basic property. We note that the $q$-binomial coefficients vanish unless the following conditions are satisfied
\begin{align}
\label{eq:k k' range}
k\leq \lambda_m, \qquad k-k'\leq \gamma_m
\end{align}
We would like to rewrite the formula \eqref{eq:psi coeff X} for $\psi^{(m)}_{k}$ in the form \eqref{eq:psi m+1 formula}. Consider two permutations 
\begin{align}\label{eq:rho sigma}
\rho\in S_{\alpha_m+k'}/(S_{\beta_m+k}\times S_{\gamma_m-k+k'}),
\qquad 
\sigma \in S_{\lambda_m-k'}/(S_{\lambda_m-k}\times S_{k-k'})
\end{align}
Fix $v\in S_{\gamma^{(m+1)}}$ with $\mstc_\beta(v)=k'$ and consider a word $w\in S_{\gamma^{(m)}}$ with the following structure
\begin{align}\label{eq:w}
w=(\rho(v_1\ldots v_{\beta_m+k}\,m^{\gamma_m-k+k'}),\sigma(v_{\beta_m+k+1}\ldots v_{\beta_m+\lambda_m}\,m^{k-k'}))
\end{align} 
The two permutations $\rho$ and $\sigma$ are chosen in \eqref{eq:rho sigma} in such a way that they do not change the order of the letters of $v$ in the word $w$. 
The following formulas hold 
\begin{align}\label{eq:binom inv w}
q^{\inv(v)+(\beta_m+k)(k-k')}
\qbinom{\alpha_m+k',\beta_m+k} \qbinom{\lambda_m-k',
k-k'}=\sum_{\rho,\sigma} q^{\inv(w)}
\end{align}
and 
\begin{align}\label{eq:mstc beta w}
\normalfont{\mstc}_\beta(w)=k
\end{align}
These two formulas are proved in Lemma \ref{lem:w} and Lemma \ref{lem:mstc =k} respectively. Using \eqref{eq:binom inv w} in \eqref{eq:psi coeff X}  gives
\begin{align}\label{eq:psi coeff Y}
\psi^{(m)}_{k}
=
\sum_{k'=\max(0,k-\gamma_m)}^{k}
\sum_{\substack{v\in S_{\gamma^{(m+1)}}\\ \mstc_\beta(v)=k' } }
\sum_{\rho,\sigma}
q^{\inv(w)}
\end{align}
where $w$ is of the form \eqref{eq:w}. Let us rewrite the sums in \eqref{eq:psi coeff Y} in terms of a sum over $S_{\gamma^{(m)}}$.  

Consider a word $w\in S_{\gamma^{(m)}}$. Let $v$ denote the word obtained from $w$ after removing all letters $m$. Compute $\mstc_\beta(v)$ and denote it by $k'$. In Lemma \ref{lem:mstc =k} we show that $k'\leq \lambda_m$, therefore we can split $w$ into two subwords $w=(w^{(1)},w^{(2)})$ as follows
\begin{align}
w^{(1)}=(w_1\ldots w_{\alpha_m+k'}),\qquad
 w^{(2)} = (w_{\alpha_m+k'+1}\ldots w_{\alpha_m+\lambda_m})
\end{align}
The subwords $w^{(1)}$ and $w^{(2)}$ contain subwords of $v$ and subwords of $(m^{\gamma_m})$. Let $l$ be an integer such that $w^{(1)}$ has $\gamma_m-l$ letters $m$ and $w^{(2)}$ has $l$ letters $m$. Set $k=k'+l$, then there exists a unique pair of permutations $\rho$ and $\sigma$ as in \eqref{eq:rho sigma} such that
\begin{align*}
w^{(1)}=\rho(v_1\ldots v_{\beta_m+k'+l}\,m^{\gamma_m-l}),
\quad 
w^{(2)}=\sigma(v_{\beta_m+k'+l+1}\ldots v_{\lambda_m+\beta_m}\,m^{l})
\end{align*}
Hence the word $w$ is of the form
\begin{align}\label{eq:w v l}
w= (\rho(v_1\ldots v_{\beta_m+k'+l}\,m^{\gamma_m-l}),\sigma(v_{\beta_m+k'+l+1}\ldots v_{\lambda_m+\beta_m}\,m^{l}))
\end{align}
According to \eqref{eq:mstc beta w} the words $w$ which are of this form have a fixed value $\mstc_\beta(w)$ 
\begin{align}
\mstc_\beta(w)=k=k'+l=\mstc_\beta(v)+l
\end{align}
Therefore a word $w$ with a fixed value $\mstc_\beta(w)=k$ reproduces a unique collection of indices $(v,\rho,\sigma)$ with the restriction $\mstc_\beta(v)=k'$. This implies a bijection between all summation labels in \eqref{eq:psi coeff Y} and all $w\in S_{\gamma^{(m)}}$ such that $\mstc_\beta(w)=k$. Thus we can write \eqref{eq:psi coeff Y} as
\begin{align}\label{eq:psi coeff Z}
\psi^{(m)}_{k}
=
\sum_{\substack{w\in S_{\gamma^{(m)}}\\ \mstc_\beta(w)=k}}
q^{\inv(w)}
\end{align}
Inserting this formula into $\Psi^{(m)}(x)$ gives \eqref{eq:Psi word sum}. By induction it also holds for $\Psi^{(1)}(x)=\Phi_{\alpha,\beta}(x;q)$ and proves \eqref{eq:Phi mstc inv proof}. 

In the remaining part of this section we state and prove two lemmas which validate the two key equations \eqref{eq:binom inv w} and \eqref{eq:mstc beta w}.
\begin{lem}\label{lem:w}
Let $v=v_1\ldots v_L$ be a word with $v_i>m$ for every $i$. Fix integers $a,b,c$ with $a\geq b\geq 0$ and $0\leq c\leq L$. Consider the word
\begin{align}\label{eq:two shuffles}
w=(\rho(v_1\ldots v_c\,m^{a-b}),\sigma(v_{c+1}\ldots v_L\,m^b))
\end{align}
where $\rho\in S_{c+a-b}/(S_c\times S_{a-b})$ and $\sigma \in S_{L-c+b}/(S_{L-c}\times S_{b})$, i.e. $\rho$ and $\sigma$ are two permutations preserving the relative order of $v_i$ in the corresponding parts of the word $w$. Then
\begin{align}\label{eq:inv formula}
\sum_{\rho,\sigma}
q^{\normalfont{\inv}(w)}=q^{\normalfont{\inv}(v)+c b}\qbinom{a-b+c,c}\qbinom{L+b-c,b}
\end{align}
\end{lem}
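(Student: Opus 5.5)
We will decompose $\inv(w)$ into pieces, each of which either depends only on $v$ or is controlled by exactly one of the two shuffles $\rho$ and $\sigma$. Write $w=(w^{(1)},w^{(2)})$ with $w^{(1)}=\rho(v_1\ldots v_c\,m^{a-b})$ and $w^{(2)}=\sigma(v_{c+1}\ldots v_L\,m^b)$. An inversion of $w$ is a pair of positions $i<j$ with $w_i>w_j$. We sort these pairs according to whether both letters are letters of $v$, both are $m$'s, or exactly one is an $m$. Since all letters of $v$ exceed $m$, pairs of two $m$'s never contribute. The shuffles preserve the relative order of the letters of $v$, so the pairs formed by two letters of $v$ contribute exactly $\inv(v)$, independently of $\rho$ and $\sigma$.

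Only the mixed pairs, consisting of a letter $v_i$ standing to the left of some $m$, remain; each such pair is an inversion. We split them into three groups.
\begin{enumerate}[label=(\roman*)]
\item Both letters lie in $w^{(1)}$.
\item Both letters lie in $w^{(2)}$.
\item The letter $v_i$ lies in $w^{(1)}$ and the $m$ lies in $w^{(2)}$.
\end{enumerate}
Group (iii) always has exactly $c\cdot b$ pairs, since $w^{(1)}$ contains the $c$ letters $v_1\ldots v_c$ and $w^{(2)}$ contains all $b$ of the remaining $m$'s. No pair with the $m$ in $w^{(1)}$ and the letter of $v$ in $w^{(2)}$ is an inversion, because then the $m$ stands to the left.

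Group (i) depends only on $\rho$. Replacing every letter of $v$ by a symbol $1$ and every $m$ by $0$ turns $w^{(1)}$ into a binary word with $c$ ones and $a-b$ zeros. The number of pairs in group (i) is the inversion number of this binary word. The classical identity $\sum q^{\inv}=\qbinom{c+a-b,c}$ over such binary words then gives the first factor. This is the same counting already used in Proposition \ref{prop:fixed bars Mac}: the sum over weakly increasing sequences $0\leq r_1\leq\cdots\leq r_c\leq a-b$ of $q^{\sum r_i}$, where $r_i$ counts the zeros to the right of the $i$-th one. Likewise, group (ii) depends only on $\sigma$ and contributes $\qbinom{L-c+b,b}$.

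Since $\rho$ and $\sigma$ range independently, the sum over both factorises. This gives $q^{\inv(v)+cb}\qbinom{a-b+c,c}\qbinom{L+b-c,b}$, as claimed in \eqref{eq:inv formula}.

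I do not expect a genuine obstacle here. The only point needing care is the bookkeeping that the cross term is exactly $cb$, which uses $v_i>m$ for all $i$, together with the fact that $\rho$ and $\sigma$ are coset representatives. These are equivalently the shuffles of two fixed sequences, so the classical binary-word generating function applies directly.
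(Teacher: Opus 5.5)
Your proof is correct and follows essentially the same route as the paper. You separate off $\inv(v)$ from the pairs of letters of $v$, note that the cross term between $v_1\ldots v_c$ in the first block and the $b$ letters $m$ in the second block is exactly $cb$, and evaluate the within-block inversions of the two independent shuffles using the standard binary-word interpretation of the $q$-binomial coefficient. (A cosmetic slip: if the ones are numbered left to right, the number $r_i$ of zeros to the right of the $i$-th one is weakly decreasing rather than increasing, but reindexing gives the same sum.)
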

\begin{proof}
We start from the sum on the left hand side of \eqref{eq:inv formula}. Using \eqref{eq:two shuffles} we write $w$ explicitly
\begin{align*}
\sum_{\rho,\sigma}
q^{\normalfont{\inv}(w)}= 
\sum_{\rho,\sigma}
q^{\normalfont{\inv}((\rho(v_1\ldots v_c\,m^{a-b}),\sigma(v_{c+1}\ldots v_L\,m^b)))}
\end{align*}
Since $\rho$ and $\sigma$  do not change the order of $v_i$'s we can extract the contribution of the inversion number of $(v_1\ldots v_L)$ and then replace all $v_i$ by a letter $d$ such that $d>m$. This gives us
\begin{align*}
\sum_{\rho,\sigma}
q^{\inv(w)}= 
q^{\inv(v)}
\sum_{\rho,\sigma}
q^{\inv((\rho(d^c,m^{a-b}),\sigma(d^{L-c},m^b)))}
\end{align*}
Next we split the inversion number of $(\rho(d^c,m^{a-b}),\sigma(d^{L-c},m^b))$ into the sum of the inversion numbers of $\rho(d^c,m^{a-b})$ and $\sigma(d^{L-c},m^b)$ taking into account the inversions between $d^c$ and $m^b$
\begin{align*}
\inv((\rho(d^c,m^{a-b}),\sigma(d^{L-c},m^b))) = 
\inv(\rho(d^c,m^{a-b})) +\inv(\sigma(d^{L-c},m^b))+c b
\end{align*}
This gives us
\begin{align}\label{eq:q inv proof}
\sum_{\rho,\sigma}
q^{\inv(w)}= 
q^{\inv(v)+c b}
\sum_{\rho}
q^{\inv(\rho(d^c,m^{a-b}))}
\sum_{\sigma}
q^{\inv(\sigma(d^{L-c},m^b))}
\end{align}
The sums over $\rho$ and $\sigma$ can be computed using the formula
\begin{align}\label{eq:q binom inv}
\sum_{\pi\in S_{A+B}/(S_A\times S_B)}
q^{\inv(\pi(1^{A},0^B))} = \qbinom{A+B,A}
\end{align}
which is a standard combinatorial interpretation of the $q$-binomial coefficient. Using \eqref{eq:q binom inv} in \eqref{eq:q inv proof} leads to \eqref{eq:inv formula} and proves the Lemma.
\end{proof}
With the following choice of the integers $L,a,b$ and $c$
\begin{align}\label{eq:L}
L=\gamma_{m+1}+\cdots +\gamma_{n+1}=\lambda_m+\beta_m
\end{align}
\begin{align*}
a=\gamma_m,
\quad 
b=k-k',
\quad 
c=\beta_m+k
\end{align*} 
the formula \eqref{eq:inv formula} becomes \eqref{eq:binom inv w}. 

\begin{lem}\label{lem:mstc =k}
For any  $v\in S_{\gamma^{(m+1)}}$ we have 
\begin{align}
\label{eq:mstc beta v}
\normalfont{\mstc}_\beta(v)\leq \lambda_m
\end{align}
As above we set $\normalfont{\mstc}_\beta(v)=k'$ and let $\rho$ and $\sigma$ be chosen as in \eqref{eq:rho sigma}. For $w\in S_{\gamma^{(m)}}$ of the form
\begin{align}\label{eq:w form}
w= (\rho(v_1\ldots v_{\beta_m+k}\,m^{\gamma_m-k+k'}),\sigma(v_{\beta_m+k+1}\ldots v_{\lambda_m+\beta_m}\,m^{k-k'}))
\end{align}
we have
\begin{align}\label{eq:mstc = k}
\normalfont{\mstc}_\beta(w)=k
\end{align}
\end{lem}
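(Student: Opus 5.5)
The plan is to compute $\code((\std(w))^{-1})$ explicitly in terms of the code of $v$ and the positions of the letters $m$, and then to evaluate $\st$ directly.

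\textbf{The code of $w$.} Write $c=\code((\std(w))^{-1})$. By \eqref{eq:Lehmer code}, the entry of $c$ attached to the $i$-th letter of $w$ in standardised order is the number of letters to its left that are strictly larger (in $\std(w)$). The letters $m$ are the smallest letters of $w$ and come first in the standardisation. Hence $c$ splits as $c=(e_1\ldots e_{\gamma_m},c^v)$, where:
\begin{itemize}
\item $e_t$ is the number of letters of $v$ to the left of the $t$-th letter $m$ of $w$, so $e_1\leq\cdots\leq e_{\gamma_m}$;
\item $c^v=\code((\std(v))^{-1})$, because letters $m$ never count as larger letters.
\end{itemize}
Since $\beta^{\gamma^{(m)}}=(\beta_m^{\gamma_m},\beta^{\gamma^{(m+1)}})$, we get
\[
c-\beta^{\gamma^{(m)}}=(e_1-\beta_m\ldots e_{\gamma_m}-\beta_m,\;c'),
\qquad c'=c^v-\beta^{\gamma^{(m+1)}},\qquad \st(c')=k'.
\]
In the form \eqref{eq:w form}, the $\gamma_m-k+k'$ copies of $m$ in the $\rho$-part have at most $\beta_m+k$ letters of $v$ to their left. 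The $k-k'$ copies in the $\sigma$-part have at least $\beta_m+k$. So the shifted prefix $\epsilon_t=e_t-\beta_m$ consists of a first block with entries $\leq k$, followed by a second block of $k-k'$ entries, each $\geq k$.

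\textbf{The bound \eqref{eq:mstc beta v}.} Here I use the last block of $c'$. The letters $n+1$ of $v$ are the largest, so their code entries are $0$, and $\beta^{\gamma}$ is $0$ there as well; the last $\alpha_0$ entries of $c'$ therefore vanish. A chain $i_1<\cdots<i_r$ in \eqref{eq:st} needs $c'_{i_j}>r-j\geq 0$, so it cannot use these entries. Hence $r\leq L-\alpha_0\leq L-\beta_m=\lambda_m$ by \eqref{eq:L} and $\alpha_0\geq\beta_m$.

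\textbf{Lower bound $\st\geq k$.} Take the $k-k'$ entries of the second block, followed by a chain of length $k'$ in $c'$ realising $\st(c')=k'$. This is a chain of total length $k$.
\begin{itemize}
\item Its $j$-th entry for $j\leq k-k'$ is $\geq k>k-j$.
\item The remaining entries satisfy $c'_{i_{j'}}>k'-j'$, which is exactly the required inequality after re-indexing.
\end{itemize}

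\textbf{Upper bound $\st\leq k$.} This is the main point. Let $i_1<\cdots<i_r$ be a valid chain with $r>k$, and let $t$ of its elements lie in the $m$-prefix and $s=r-t$ in $c'$.
\begin{itemize}
\item The tail of the chain is a valid chain of length $s$ for $c'$, since its entries satisfy $c'>s-j'$. Hence $s\leq k'$.
\item The first chain element has $\epsilon_{i_1}>r-1\geq k$, so it cannot lie in the first block. Since positions increase along the chain, no element can lie in the first block at all. Hence $t\leq k-k'$.
\end{itemize}
This gives $r\leq k$, a contradiction, so $\st(c-\beta^{\gamma^{(m)}})=k$, which is \eqref{eq:mstc = k}.

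The step that needs the most care is the bookkeeping of how many letters of $v$ precede each $m$ in the two parts of \eqref{eq:w form}. In particular, one must check that the boundary value $e_t=\beta_m+k$ can occur in both blocks without breaking the argument. It does not: the first block needs entries $\leq k$, which still excludes it from starting a chain of length $>k$, and the second block needs entries $\geq k$, which suffices for the lower bound.
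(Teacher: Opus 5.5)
Your proof is correct and follows the paper's route. You use the same decomposition of $\code((\std(w))^{-1})-\beta^{\gamma^{(m)}}$ into the $m$-prefix with entries $p_i-i-\beta_m$ (your $\epsilon_t$) followed by the code of $v$. You use the same zero-tail argument for $\mstc_\beta(v)\leq\lambda_m$, and the same lower-bound chain obtained by prepending the $\sigma$-block to an optimal chain for $v$. Your upper bound goes further than the paper: you split an arbitrary chain of length $r>k$ into its prefix part, which is confined to the $\sigma$-block, and its tail, which has length at most $k'$. This makes rigorous the paper's brief remark that the first $\gamma_m-k+k'$ entries cannot extend the chain.
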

\begin{proof}
From Definition \ref{def:mstc beta} of $\mstc_\beta$ we have
\begin{align}\label{eq:mstc beta v def}
\mstc_\beta(v)=\st\left(\code((\std(v))^{-1})-\beta^{\gamma^{(m+1)}}\right)
\end{align}
The word $v$ is of length $\lambda_m+\beta_m$, therefore
\begin{align}\label{eq:mstc beta v bound 1}
\mstc_\beta(v)\leq \lambda_m+\beta_m
\end{align}
Let us argue that the $\alpha_0$ copies of $n+1$ contained in $v$ do not contribute to $\mstc_\beta(v)$ and thus further decrease this bound. 
The standardisation of $v$ maps its entries to $\{1\ldots \lambda_m+\beta_m\}$. In particular the $\alpha_0$ copies of $n+1$ are mapped to $\{\lambda_m+\beta_m-\alpha_0+1\ldots \lambda_m+\beta_m\}$. Let us denote their positions by $r_1\ldots r_{\alpha_0}$, then 
\begin{align*}
\std(v) = (\ldots \underset{\text{position }r_1}{\underbrace{\lambda_m+\beta_m-\alpha_0+1}}\ldots \underset{\text{position }r_{\alpha_0}}{\underbrace{\lambda_m+\beta_m}} \ldots)
\end{align*}
The last $\alpha_0$ entries of $(\std(v))^{-1}$ are given by $r_1\ldots r_{\alpha_0}$
\begin{align*}
\std(v)^{-1} = (\ldots r_1 \ldots r_{\alpha_0})
\end{align*}
Since these positions are increasing from left to right $r_1 < \cdots < r_{\alpha_0}$ their corresponding $\code$ entries are all equal to $0$
\begin{align*}
\code( \std(v)^{-1}) = (\ldots \underset{\alpha_0 \text{ entries}}{\underbrace{0\ldots 0}})
\end{align*}
The last $\alpha_0$ entries of $\beta^{\gamma^{(m+1)}}$ are also all equal to $0$ which means that the last $\alpha_0$ entries of the argument in $\st$ in \eqref{eq:mstc beta v def} are also equal to $0$ and thus do not contribute to the value of $\st$. This gives us
\begin{align}\label{eq:mstc beta v bound 2}
\mstc_\beta(v)\leq \lambda_m+\beta_m-\alpha_0\leq \lambda_m
\end{align}
where the second inequality follows from the condition $\alpha_0\geq \max(\beta)$. This proves \eqref{eq:mstc beta v}.

One consequence of \eqref{eq:mstc beta v} is that we can always split any word $w=(w_1\ldots w_{\alpha_m+\lambda_m})\in S_{\gamma^{(m)}}$ into two parts
\begin{align*}
(w_1\ldots w_{\alpha_m+k'}),\quad (w_{\alpha_m+k'+1}\ldots w_{\alpha_m+\lambda_m})
\end{align*}
where $k'=\mstc_\beta(v)$ and $v$ is the subword of $w$ obtained after deleting all letters $m$. If we specify how many entries $m$ these two subwords have then we can compute $\mstc_\beta(w)$. By Definition \ref{def:mstc beta}
\begin{align}\label{eq:mstc beta w X}
\mstc_\beta(w)=\st\left(\code((\std(w))^{-1})-\beta^{\gamma^{(m)}}\right)
\end{align}
We will compute explicitly the right hand side of \eqref{eq:mstc beta w X} for $w$ of the form \eqref{eq:w form} and show that it is equal to $k$. 

The smallest letters in $w$ are the $\gamma_m$ letters $m$. Therefore applying standardisation to $w$ assigns labels $j\in \{1\ldots \gamma_m\}$ to the letters $m$ and assigns labels $j> \gamma_m$ to the letters $v_i$. These labels become the letters of $\std(w)$. The positions of the letters $\{1\ldots \gamma_m\}$ in $\std(w)$ give precisely the first $\gamma_m$ entries of the inverse permutation of $\std(w)$. Let $p_1<\cdots<p_{\gamma_m}$ denote the positions of the copies of $m$ in $w$, then
\begin{align*}
((\std(w))^{-1})_i = p_i,
\qquad i=1\ldots \gamma_m
\end{align*}
In other words  $((\std(w))^{-1})$ is of the form 
\begin{align}\label{eq:std inv positions}
(\std(w))^{-1} =(p_1\ldots p_{\gamma_m}, s_{1}\ldots s_{L})
\end{align}
where $s_i$ are positions of the letters of $v$ and $L$ is given by \eqref{eq:L}. 
Next we compute the first $\gamma_m$ entries of the Lehmer code of $(\std(w))^{-1}$ using the definition \eqref{eq:Lehmer code}
\begin{align}\label{eq:Lehmer code of w X}
(\code(\std(w)^{-1}))_i=|\{j:i<j\leq L+\gamma_m, ~ (\std(w)^{-1})_i>(\std(w)^{-1})_j\}|
\end{align}
For the entries $i=1\ldots \gamma_m$ we can replace $(\std(w)^{-1})_i$ with $p_i$ and since $p_1<\cdots<p_{\gamma_m}$ we can replace $(\std(w)^{-1})_j$ with $s_{j-\gamma_m}$ (note $\gamma_m<j\leq L+\gamma_m$)
\begin{align}\label{eq:Lehmer code of w Y}
(\code(\std(w)^{-1}))_i=|\{j: 1\leq j\leq L, ~ p_i>s_j\}|,
\qquad i=1\ldots \gamma_m
\end{align}
Fix an $i=1\ldots \gamma_m$. The word $\std(w)^{-1}$, which is of the form \eqref{eq:std inv positions}, contains a total of $(p_i-1)$ entries which are smaller than $p_i$. Out of these $(p_i-1)$ entries $i-1$ are located to the left of $p_i$ because of the ordering $p_1<\cdots<p_{i}$. Hence to the right of $p_i$ we must have $(p_i-1)-(i-1)$ smaller entries in $\std(w)^{-1}$. This gives us
\begin{align}\label{eq:Lehmer code of w Z}
(\code(\std(w)^{-1}))_i= p_i-i,
\qquad i=1\ldots \gamma_m
\end{align}
Next we use the fact that the letters $m\in w$ are split into two parts
\begin{align*}
w= (\rho(v_1\ldots v_{\beta_m+k}\,m^{\gamma_m-k+k'}),\sigma(v_{\beta_m+k+1}\ldots v_{\lambda_m+\beta_m}\,m^{k-k'}))
\end{align*}
Because of this the entries of the Lehmer code $p_i-i$ in \eqref{eq:Lehmer code of w Z} must satisfy 
\begin{align}\label{eq:p leq}
&p_i-i\leq k+\beta_m,&\quad&\text{for }1\leq i\leq\gamma_m-k+k'
\\
\label{eq:p geq}
&p_i-i\geq k+\beta_m,&\quad& \text{for }\gamma_m-k+k'<i\leq\gamma_m
\end{align}
For example if $\rho$ is the identity permutation and $\sigma$ is the permutation which places all $m^{k-k'}$ before the $v$'s then in both cases \eqref{eq:p leq} and \eqref{eq:p geq} we have $p_i-i=k+\beta_m$. 
After setting
\begin{align}\label{eq:z_i}
z_i= p_i-i-\beta_m,
\qquad i=1\ldots \gamma_m
\end{align}
we can write the argument of $\st$ in \eqref{eq:mstc beta w X} as
\begin{align}\label{eq:z t}
\code((\std(w))^{-1})-\beta^{\gamma^{(m)}}&=\left( z_1\ldots z_{\gamma_m}, t_1\ldots t_L \right)\\
&=( \underset{\text{all }z_i\leq k}{\underbrace{z_1\ldots z_{\gamma_m-k+k'}}},\underset{\text{all }z_i\geq k}{\underbrace{z_{\gamma_m-k+k'+1}\ldots z_{\gamma_m}}}, t_1\ldots t_L )
\nonumber
\end{align}
where $t=(t_1\ldots t_L)=\code((\std(v))^{-1})-\beta^{\gamma^{(m+1)}}$ records the contributions from the letters of $v$ and in the second line we indicated explicitly the restrictions on $z$'s due to \eqref{eq:p leq} and \eqref{eq:p geq}. By the conditions of the Lemma $\st(t)=\mstc_{\beta}(v)=k'$ hence the subword $t$ has entries $t_{i_1}\ldots t_{i_{k'}}$ with $i_1<\cdots<i_{k'}$ and $t_{i_j}>k'-j$. Prepending the last $k-k'$ new entries $z$ (which are all $\geq k$) gives
\begin{align}\label{eq:max seq}
(z_{\gamma_m-k+k'+1}\ldots z_{\gamma_m},t_{i_1}\ldots t_{i_{k'}})>(k-1,k-2\ldots0)
\end{align}
componentwise. Thus we have
\begin{align}\label{eq:st X}
\st \left(z_{\gamma_m-k+k'+1}\ldots z_{\gamma_m}, t_1\ldots t_L \right)=k
\end{align}
Since the first $\gamma_m-k+k'$ letters $z$ in \eqref{eq:z t} do not exceed $k$ the sequence in \eqref{eq:max seq} cannot be extended using these letters, thus
\begin{align}\label{eq:st Y}
\st \left(z_{1}\ldots z_{\gamma_m}, t_1\ldots t_L \right)=k
\end{align}
Due to \eqref{eq:z t} and \eqref{eq:mstc beta w X} this proves \eqref{eq:mstc = k}.
\end{proof}

\section*{Acknowledgments}
The author gratefully acknowledges financial support of the Australian Research Council.

\end{document}